\setlist[enumerate]{leftmargin=.5in}
\setlist[itemize]{leftmargin=.5in}
\newcommand{\change}[1]{{#1}}
\newcommand{\Real}{\mathbb{R}}
\newcommand{\eps}{\varepsilon}
\newcommand{\1}{\mathds{1}}
\renewcommand{\t}{\text{t}}
\renewcommand{\d}{\mathrm{d}}
\newcommand{\grad}{\nabla}
\newcommand{\lap}{\Delta}
\renewcommand{\P}{\mathds{P}}
\newcommand{\Q}{\mathds{Q}}
\newcommand{\E}{\mathds{E}}
\newcommand{\EP}{\E_\P}
\newcommand{\EQ}{\E_\Q}
\newcommand{\F}{\mathcal{F}}
\renewcommand{\L}{L}
\DeclareMathOperator{\cov}{cov}
\DeclareMathOperator{\var}{var}
\newcommand{\normal}{\mathrm{N}}
\newcommand{\dkl}{D_{\mathrm{KL}}}
\newcommand{\znu}{\zeta}
\newcommand{\zmu}{\eta}
\newcommand{\zed}{M}
\newcommand{\D}{\mathscr{T}}
\newcommand{\tub}{\mathscr{D}}
\title{
  Relative Entropy Methods for the Approximation of Reactive Trajectories 
  \thanks{BvK was supported by NSF DMS-2012207}}
\author{
  Gabriel Earle \thanks{Department of Mathematics and Statistics, University of Massachusetts, Amherst}
  \and Brian Van Koten \thanks{Department of Mathematics and Statistics, University of Massachusetts, Amherst (\email{bvankoten@umass.edu})}}
\begin{document}
\maketitle

\begin{abstract}  
  Motivated by challenges arising in molecular simulation, we study reactive trajectories of the overdamped Langevin dynamics, i.e.\@ trajectories observed as they pass from a set $A$ corresponding to the reagents of a chemical reaction to a set $B$ corresponding to the products. Reactive trajectories are known to have the same distribution as trajectories of the overdamped Langevin dynamics biased by a singular drift related to the committor function.  In this work, we assess the effect of replacing the exact singular drift with an approximation based on an approximate committor function. We derive a convenient formula for the relative entropy between the distributions of exact and approximate reactive trajectories, and we propose a stochastic gradient descent method for minimizing the entropy to train an approximate committor function on the fly while computing reactive trajectories. We also devise a model assessment procedure for comparing the qualities of different approximations to the committor function based on the relative entropy.
\end{abstract}

\section{Introduction}

For many molecular systems, the most interesting phenomena relate to extremely rare transitions between long-lived states, e.g.\@ transitions between folded states of a protein. Such systems are called \emph{metastable}. It may be infeasible to perform a simulation long enough to observe even one rare transition in a metastable system. Therefore, a wide variety of methods have been devised to compute statistics of rare transitions by means other than direct simulation. Some of these methods, like adaptive multilevel splitting~\cite{cerou_adaptive_2007,cerou_adaptive_2019}, forward flux sampling~\cite{allen_forward_2009}, and transition path sampling~\cite{bolhuis_transition_2002} produce samples of transition paths or fragments of transition paths. Others produce a representative path, for example the minimizer of the Wentzell--Friedlin action, which is roughly speaking the mode of the distribution of transition paths in the limit of low temperature~\cite{vanden-eijnden_geometric_2008}; see~\cite{e_transition-path_2010} for a survey. Finally, one can compute many important statistics of transition paths in terms of solutions of partial differential equations using transition path theory~\cite{e_towards_2006,e_transition-path_2010}. By various means, these methods efficiently explore regions associated with rare transitions. 

Motivated by the challenge of simulating metastable systems, we study \emph{reactive trajectories} of the overdamped Langevin dynamics
\begin{equation}\label{eq: intro overdamped Langevin}
  \d X_t = - \grad U (X_t) \, \d t + \sqrt{2 \eps} \,  \d B_t,
\end{equation}
which are trajectories observed as they pass from a set $A$ corresponding to the reagents of a chemical reaction to a set $B$ corresponding to the products; cf.\@ Figures~\ref{fig: tpp-schematic} and~\ref{fig: reactive-trajectory}. In~\cite{lu_reactive_2015}, building on the transition path theory of E and Vanden-Eijnden~\cite{e_towards_2006}, Lu and Nolen showed that reactive trajectories have the same distribution as the unique strong solution of the \emph{transition path equation}
\begin{equation}\label{eq: intro tpp}
  \d Y_t = - \grad U (Y_t) \, \d t + 2 \eps \grad \log q(Y_t) \, \d t + \sqrt{2 \eps} \,  \d B_t 
\end{equation}
with initial condition
\begin{equation}\label{eq: intro reactive flux distribution}
  Y_0 \sim \frac{1}{\znu} \lvert \grad q(x) \rvert \exp(-U(x)/\eps) \, \d S_A(x).
\end{equation}
Here, 
\begin{equation*}
  q(x) = \P [\tau_B < \tau_A \vert Y_0=x]
\end{equation*}
is the \emph{committor function}, i.e.\@ the probability that a trajectory of the overdamped Langevin dynamics~\eqref{eq: intro overdamped Langevin} with $Y_0=x$ will hit $B$ before $A$, $S_A$ is the surface measure on $\partial A$, and $\znu$ is a normalizing constant. The drift term $2 \eps \grad \log q(X_t)$ in~\eqref{eq: intro tpp} is singular on $\partial A$, since $q$ is zero on $\partial A$, and it forces trajectories away from $A$ and towards $B$ so that the lengths of reactive trajectories will typically be much shorter than the intervals of time between reactions; cf.\@ Figure~\ref{fig: reactive-trajectory}. That is, the singular drift is a bias that forces rare transitions to occur quickly and according to the correct distribution.

We propose a method of training an approximate committor function and singular drift on the fly while computing reactive trajectories. To provide a theoretical foundation for our method, we first assess the errors associated with replacing the exact committor $q$ in the transition path equation with an approximation $\tilde q$. Our main result is a change of measure between the exact distribution of reactive trajectories $\Q$ and the distribution $\P_{\tilde q}$ of~\eqref{eq: intro tpp} with $\tilde q$ in place of $q$. As a corollary, we derive conditions on $\tilde q$ that guarantee the existence of a weak solution of~\eqref{eq: intro tpp} with $\tilde q$ in place of $q$. Note that the standard existence theory for stochastic differential equations does not apply since the drift $2 \eps \grad \log \tilde q$ is not Lipschitz when $\tilde q = 0$ on $\partial A$.\footnote{Note also that many of the arguments in~\cite{lu_reactive_2015} use that $q(Y_t)$ is a martingale and so apply only to the exact process.} 
We derive a formula for the change of measure that can be computed in practice without knowledge of the exact committor or any statistics of the exact transition path process, at least up to a normalizing constant. Considering Girsanov's theorem, one would expect the change of measure to depend on the difference in drifts $2\eps \grad \log q(Y_t) - 2 \eps \grad \log \tilde q (Y_t)$. Surprisingly, we show that one can entirely eliminate the dependence on $q$. Based on this formula, we propose a stochastic gradient descent method for  minimizing the relative entropy $\dkl ( \P_{\tilde q} \Vert \Q)$ to train an approximate committor $\tilde q$ while simultaneously computing approximate reactive trajectories biased by $2 \eps \grad \log \tilde q$.

To explain why we are interested in methods of this type, we wish to make two comparisons with other classes of methods in the computational chemistry literature. First, adaptive multilevel splitting~\cite{cerou_adaptive_2007} computes a sample from the exact distribution of reactive trajectories initiated from a fixed starting point near the boundary of $A$ by branching and killing trajectory fragments. Our methods produce approximate reactive trajectories, but since we calculate the committor and trajectories together, we can ensure that the initial points have at least approximately the reactive flux distribution~\eqref{eq: intro reactive flux distribution}.\footnote{We note that, in~\cite{lopes_analysis_2019}, Lopes and Leli\`evre propose a very promising means of determining the right distribution of initial points for adaptive multilevel splitting, but their method requires the choice of a surface close to $A$ having certain properties that might be difficult to verify in practice. Our approach does not require the identification of any such surface.} Moreover, unlike adaptive multilevel splitting, our approach does not require the explicit specification of a reaction coordinate. 
In fact, while computing reactive trajectories, we solve for the committor, which is in some respects the ideal reaction coordinate~\cite{cerou_adaptive_2019}.

Second, one can compute the committor as the solution of the Kolmogorov equation~\eqref{eq: elliptic commitor equation} using numerical methods for high-dimensional partial differential equations such as physics-informed neural networks (PINNs)~\cite{khoo_solving_2019,li_computing_2019}, tensor networks~\cite{chen_committor_2023}, or kernel methods~\cite{evans_computing_2022,aristoff_fast_2024}. To calculate statistics of reactive trajectories given the committor, for example the crossover time or reaction rate, one can take averages over the Boltzmann distribution; cf.\@~\eqref{eq: tpt formula for crossover time}. We note, however, that summary statistics like the reaction rate do not always adequately characterize the reactive trajectories, especially when a reaction may occur by multiple mechanisms. Therefore, even given an accurate estimate of the committor, one might wish to sample reactive trajectories.\footnote{See also~\cite{yuan_optimal_2024} for evidence that, under some circumstances, rates computed directly from trajectories biased by an approximate singular drift might be more accurate than rates computed via formulas from transition path theory as in~\eqref{eq: tpt formula for crossover time}.}  Moreover, it may be very difficult to assess the accuracy of an approximate committor when the state space is high-dimensional. The usual \emph{a posteriori} error estimates for numerical solutions of partial differential equations on low-dimensional spaces do not apply. Our method provides an estimate of the relative entropy $\dkl (\P_{\tilde q} \Vert \Q)$, which directly measures the error in the distribution of reactive trajectories. Finally, in the implementation of methods like PINNs, one must choose a set of collocation points. For computing committor functions of metastable systems, one may have to use enhanced sampling methods like stratification or importance sampling to generate a suitable set of collocation points covering the transition region~\cite{rotskoff_active_nodate}. We propose computing trajectories biased by an approximate singular drift as a convenient means to similar ends.

To summarize, our proposed relative entropy optimization method computes both reactive trajectories and the committor, with error estimates, with at least approximately the correct distribution of initial points, and without requiring an explicit choice of collocation points or a reaction coordinate. Instead, the user must specify an initial approximation $\tilde q_0$ to the committor function. We discuss the choice of $\tilde q_0$ in Section~\ref{sec: numerical experiments}. Here, we note only that one could choose $\tilde q_0$ so that the transition path process with $\tilde q_0$ in place of $q$ imposes a biasing force pushing on a small number of atoms to induce transitions to occur as in steered molecular dynamics~\cite{isralewitz_steered_2001}. 

In addition to optimizing the relative entropy, we demonstrate two other potential applications of our theoretical results: The assessment of models of the committor and importance sampling. For high-dimensional systems, the committor and related quantities such as the density and current of transition paths may be difficult to interpret without coarse-graining, i.e.\@ dimensionality reduction. We suggest that one could select or optimize coarse models of the committor based on the relative entropy; cf.\@ Section~\ref{sec: numerical experiments}. We also note that one can use importance sampling to correct for errors in the distribution of reactive trajectories generated based on an approximate committor. However, the change of measure involves an exponential average, and experience with similar importance sampling methods based on Girsanov's theorem suggests that such an approach would not be reliable for poor approximations of the committor; cf.\@ Section~\ref{sec: importance sampling}. Nonetheless, importance sampling yields good results for our simple test problem in Section~\ref{sec: crossover times}. 

We warn the reader that all theoretical and computational applications of our results demand that care be taken to correctly handle singularities. For example, since the drift is singular standard numerical integrators may fail. We have proposed an operator splitting scheme for integrating~\eqref{eq: intro tpp} in the case where $\partial A$ is planar. We do not prove convergence of the integrator, nor do we propose an integrator that works when $\partial A$ is curved, although we do grant that one would often want a curved boundary. We also note that the change of measure formula~\eqref{eq: computable change of measure} involves an integral whose integrand may be singular; cf.\@ Section~\ref{sec: alternative expression}.

  Finally, we note that we are only able to prove the existence of the gradient $\grad_\theta \dkl ( \P_{q_\theta} \Vert \Q)$ under rather stringent conditions; cf.\@ Assumption~\ref{asm: properties that guarantee differentiability of dkl}. We suspect, but cannot prove, that the relative entropy is differentiable under weaker conditions. In any case, our method of training the committor by stochastic gradient descent appears to work well even when Assumption~\ref{asm: properties that guarantee differentiability of dkl} fails; cf.\@ Section~\ref{subsec: training improved approximate committor}. Note that we require Assumption~\ref{asm: properties that guarantee differentiability of dkl} only to prove differentiability, not to derive the change of measure formula in Theorem~\ref{thm: complete change of measure from transition path process to simulated process} or to prove the weak law of large numbers for the estimator of relative entropy differences in Section~\ref{sec: selection}.
\\~\\
\noindent \emph{Outline.} In Section~\ref{sec: transition path theory}, we briefly review transition path theory. In Section~\ref{sec: change of measure formula}, we derive the change of measure between the exact distribution of transition paths and the distribution of~\eqref{eq: intro tpp} with an approximate committor $\tilde q$ in place of $q$. In Section~\ref{sec: applications}, we explain how to estimate relative entropy differences and gradients, and we consider the possibility of importance sampling. In Section~\ref{sec: numerical methods}, we address some practical obstacles to the implementation of our methods, including strategies for the representation of approximate committor functions that satisfy Assumption~\ref{asm: properties of approximate committor} and Assumption~\ref{asm: properties that guarantee differentiability of dkl}. Finally, in Section~\ref{sec: numerical experiments}, we apply our methods to a two-dimensional toy model. First, we propose an integrator capable of handling the singular drift. We then assess the quality of a coarse approximation of a committor, train an improved approximation by stochastic gradient descent, demonstrate the use of importance sampling to correct for errors in the committor, and study the convergence of the integrator.

\section{A Digest of Transition Path Theory}
\label{sec: transition path theory}

In this section, we present a brief review of \emph{transition path theory}~\cite{e_towards_2006,lu_reactive_2015}. Let $U: \Real^d \rightarrow \Real$ be a potential energy, describing for example a molecular system. Let $\eps = k_B T >0$ be a temperature parameter, and suppose that $X_t$ evolves according to the overdamped Langevin dynamics
\begin{equation}\label{eq: overdamped Langevin}
\d X_t = - \grad U (X_t) \, \d t + \sqrt{2 \eps} \,  \d B_t.
\end{equation}
Under certain conditions on $U$~\cite{lelievre_partial_2016}, the overdamped Langevin dynamics is ergodic for the Boltzmann distribution
\begin{equation*}
  \rho(\d x ) = Z^{-1} \exp(- U(x)/ \eps) \, \d x \text{ where } Z = \int_{\Real^d}  \exp(- U(x)/ \eps) \, \d x.
\end{equation*}
We assume ergodicity throughout this work. 

\begin{assumption}\label{asm: ergodicity}
For the given potential function $U$, the overdamped Langevin dynamics~\eqref{eq: overdamped Langevin} is ergodic. 
\end{assumption}

In transition path theory, to characterize rare transitions of the overdamped Langevin dynamics, one first chooses a disjoint pair of subsets $A$ and $B$ of $\Real^d$. In applications, $A$ and $B$ would usually be associated with metastable states, e.g.\@ they might be sets modeling different conformations of a biomolecule or the reagents and products of a chemical reaction. In addition to the sets $A$ and $B$, we find it convenient to define the \emph{transition region}
\begin{equation*}
\D = \Real^d \setminus (A \cup B).
\end{equation*}
We impose the following assumptions on $A$, $B$, and $\D$.

\begin{assumption}\label{asm: submanifold assumption}
  We assume that $A$ and $B$ are disjoint, closed subsets of $\Real^d$ with nonempty interior. The boundaries of $A$ and $B$ are regular $C^\infty$-submanifolds of $\Real^d$. The transition region $\D$ is connected. 
\end{assumption}

\begin{figure}
  \caption{Schematic illustration of reactive trajectories, entrance times, and exit times. The orange line depicts a trajectory of the overdamped Langevin dynamics. The bold red segments depict reactive trajectories.  Here, $\tau_{B,0}$ is the first hitting time of $B$, $\tau_{A,1}$ is the first hitting time of $A$ after $\tau_{B,0}$, $\tau_{B,1}$ is the first hitting time of $B$ after $\tau_{A,1}$, and so forth. The $k$'th exit time $\sigma_{A,k}$ is the last time before $\tau_{B,k}$ that the process was in $A$.}
  \begin{center}
    \includegraphics[width=0.5\linewidth]{./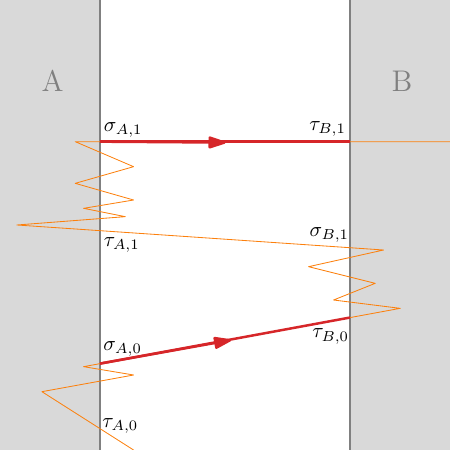}
  \end{center}
  \label{fig: tpp-schematic}
\end{figure}

\begin{figure}
  \caption{Reactive trajectories. On the left, we have a single long trajectory of the overdamped Langevin dynamics for the simple two-dimensional model potential introduced in Section~\ref{sec: numerical experiments}. The blue curves are contours of the potential. The trajectory was initialized from the Boltzmann distribution. Its starting point happened to be in the set $A$ on the left side of the figure. The trajectory was terminated on hitting the boundary of $B$ for the first time. The bold red curve depicts the end of the trajectory from the last time it left $A$ to the first time it entered $B$, i.e.\@ the reactive part of the trajectory. The faint orange curve depicts the beginning of the trajectory up to the last time it left $A$. On the right, we have three approximate reactive trajectories computed by integrating the transition path process for the approximate committor $q_{\rm{sgd}}$ calculated by gradient descent in Section~\ref{subsec: training improved approximate committor}. The blue curves are contours of the effective potential $U - 2\eps \log q_{\rm{sgd}}$ for the transition path process, which is singular on $\partial A$. The red and pink curves are the reactive trajectories.}
  \begin{center}
    \includegraphics{./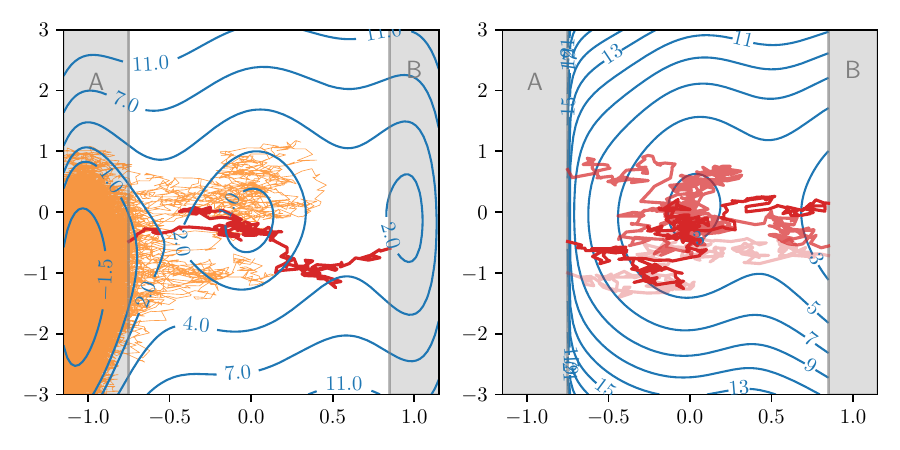}
    \end{center}
  \label{fig: reactive-trajectory}
\end{figure}

Given a pair of sets $A$ and $B$, one defines sequences of \emph{entrance times}, \emph{exit times}, and \emph{reactive trajectories} as illustrated in Figure~\ref{fig: tpp-schematic}.
The entrance times are
\begin{align*}
  \tau_{A,0} &:= \inf \{ t \geq 0; X_t \in A\}, \\
  \tau_{B,0} &:= \inf \{ t \geq \tau_{A,0}; X_t \in B\}, 
\end{align*}
and for $k \geq 1$
\begin{align*}
  \tau_{A,k} &:= \inf \{ t \geq \tau_{B,k-1}; X_t \in A\}, \\
  \tau_{B,k} &:= \inf \{ t \geq \tau_{A,k-1}; X_t \in B\}.
\end{align*}
The exit times are
\begin{align*}
  \sigma_{A,0} &:= \sup \{ \tau_{A_0} \leq t \leq \tau_{B,0}; X_t \in A\}, \\
  \sigma_{B,0} &:= \sup \{\tau_{B,0} \leq  t \leq \tau_{A,1}; X_t \in B\}, 
\end{align*}
and for $k \geq 1$
\begin{align*}
  \sigma_{A,k} &:= \sup \{ \tau_{A,k} \leq t \leq \tau_{B,k}; X_t \in A\}, \\
  \sigma_{B,k} &:= \sup \{\tau_{B,0} \leq  t \leq \tau_{A,1}; X_t \in B\}.
\end{align*}
The $k$'th reactive trajectory (or transition path) $Y^k_t$ is the segment of the random path $X_t$ between the $k$'th exit time $\sigma_{A,k}$ from $A$ and the $k$'th entrance time $\tau_{B,k}$ to $B$. That is,
\begin{equation*}
Y^k_t := X_{\sigma_{A,k} + t} \text{ for } t \in [0, \tau_{B,k} - \sigma_{A,k}]. 
\end{equation*}

Transition path theory characterizes the distribution of reactive trajectories in terms of the \emph{committor function} 
\begin{equation*}
  q(x) := \P[ \tau_B < \tau_A \vert X_0 = x].
\end{equation*}
Here, $\tau_A$ and $\tau_B$ are the first hitting times of $A$ and $B$ for $X_t$, so $q(x)$ is the probability of hitting $B$ before $A$ when starting from $x$. 
In~\cite{lu_reactive_2015}, Lu and Nolen verified that if the overdamped Langevin dynamics is in the steady state with $X_t \sim \rho$ for all $t \geq 0$, then the reactive trajectories have the same distribution as the solution of the \emph{transition path equation}
\begin{equation}\label{eq: tpp}
    \d Y_t = -\grad U (Y_t) \, \d t + 2 \eps \grad \log q(Y_t) \, \d t + \sqrt{2 \eps} \, \d B_t
\end{equation}
where $Y_0$ has the \emph{reactive flux distribution}
\begin{equation}\label{eq: reactive flux distribution}
  Y_0 \sim \frac{1}{\znu} \lvert  \grad q(x) \rvert \exp(-U(x)/\eps) \, \d S(x). 
\end{equation}
Here, $S$ is the surface measure on $\partial A$, and
\begin{equation*}
  \znu = \int_{\partial A} \lvert  \grad q(x) \rvert \exp(-U(x)/\eps) \, \d S(x).
\end{equation*}

Observe that the transition path equation is the overdamped Langevin dynamics with the additional drift term $2 \eps \grad \log q(X_t) \, \d t$. On $\partial A$, we have $q = 0$, and by Lemma~\ref{lem: properties of committor} below, $\lvert \grad q \rvert >0$. Therefore, the drift $\grad \log q = \frac{\grad q}{q}$ is singular on $\partial A$. Despite the singularity, Lu and Nolen showed that a unique strong solution of~\eqref{eq: tpp} exists for any initial distribution, even distributions supported on $\partial A$. Moreover, with probability one, $X_t \notin A$ for $t >0$. In effect, the singular drift repels trajectories from $A$, forcing transitions to occur. 

\begin{remark}
One can derive~\eqref{eq: tpp} formally by conditioning the overdamped Langevin dynamics on hitting $B$ before $A$ using the Doob $h$-transform. This approach does not establish a definite connection with reactive trajectories or the existence of strong solutions. 
\end{remark}

The committor solves the elliptic boundary value problem
\begin{equation}\label{eq: elliptic commitor equation}
  \begin{cases}
    L q = 0 &\text{ in } \D, \\
    q=0 &\text{ on } \partial A, \\
    q = 1 &\text{ on } \partial B,\
  \end{cases}
\end{equation}
where
\begin{equation*}
L := - \grad U \cdot \grad + \eps \lap
\end{equation*}
is the generator of the overdamped Langevin dynamics~\cite{e_towards_2006}.  Therefore, under our smoothness assumptions on $U$ and $\D$, elliptic regularity, the Hopf lemma, and the maximum principle imply the properties of the committor listed in Lemma~\ref{lem: properties of committor} below. 

\begin{assumption}\label{asm: smoothness of potential}
We assume that the potential energy $U: \Real^d \rightarrow \Real$ is infinitely differentiable, but we do not assume that $U$ or any of its derivatives are bounded. 
\end{assumption}

\begin{lemma}
  \label{lem: properties of committor}
  Under Assumptions~\ref{asm: submanifold assumption} and~\ref{asm: smoothness of potential}, the forward committor function $q: \D \rightarrow [0,1]$ extends to an infinitely differentiable function defined on an open set containing $\partial A$ and $\partial B$. By abuse of notation, we let $q$ refer to both the committor function and the extension.
  For all $x \in \mathring{\D}$,
  \begin{equation*}
    q(x) >0.
  \end{equation*}
  For all $x \in \partial A$,
  \begin{equation*}
    \grad q(x)\cdot n(x) = \lvert \grad q(x) \rvert > 0,
  \end{equation*}
  where $n(x)$ is the outward unit normal to $A$ at $x$. 
\end{lemma}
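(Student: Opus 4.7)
The plan is to deduce all three conclusions from standard elliptic PDE tools applied to the boundary value problem~\eqref{eq: elliptic commitor equation}, namely interior/boundary Schauder estimates, the strong maximum principle, and the Hopf boundary point lemma. By Assumption~\ref{asm: smoothness of potential} the operator $L = -\grad U \cdot \grad + \eps \lap$ is uniformly elliptic with $C^\infty$ coefficients, and by Assumption~\ref{asm: submanifold assumption} the boundaries $\partial A$ and $\partial B$ are $C^\infty$ submanifolds. Since all three tools are local, unboundedness of $U$ or of $\D$ will not matter.

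First I would establish the smooth extension. Near any point of $\partial A \cup \partial B$, one straightens the boundary by a $C^\infty$ diffeomorphism and applies standard $C^{k,\alpha}$ boundary regularity (e.g.\@ Gilbarg--Trudinger Theorem~6.19) to~\eqref{eq: elliptic commitor equation} to conclude that $q$ is $C^\infty$ up to the boundary from the $\D$ side. One then extends $q$ smoothly across $\partial A$ and $\partial B$ into $A \cup B$ (for example by reflection in boundary-straightening coordinates) and patches together the local extensions with a smooth partition of unity. This yields an open neighborhood of $\partial A \cup \partial B$ on which $q$ admits a $C^\infty$ extension. Second, for positivity, I would use the strong maximum principle on the connected set $\D$. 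Because $q$ is a probability, $q \in [0,1]$ on $\D$ and in particular $q \geq 0$. If $q(x_0) = 0$ at an interior $x_0 \in \mathring{\D}$, then $q$ attains its infimum at the interior point $x_0$, and since $Lq = 0$ on the connected open set $\D$, the strong maximum principle forces $q \equiv 0$ on $\D$; using the smooth extension from the previous step, this contradicts $q = 1$ on $\partial B$. Hence $q > 0$ on $\mathring{\D}$. Third, for the normal derivative, I would apply Hopf's lemma at any $x \in \partial A$. The domain $\D$ satisfies an interior ball condition at $x$ because $\partial A$ is a smooth submanifold, and $q$ attains its minimum $0$ over $\overline{\D}$ at $x$. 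Hopf therefore gives that the derivative of $q$ in the direction of the outward normal of $\D$, which is $-n(x)$, is strictly negative, so $\grad q(x) \cdot n(x) > 0$. Finally, since $q \equiv 0$ along $\partial A$, any tangential derivative of $q$ at $x$ vanishes, so $\grad q(x) = (\grad q(x) \cdot n(x))\, n(x)$ and therefore $\lvert \grad q(x) \rvert = \grad q(x) \cdot n(x) > 0$.

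The only real technical point is that $\D$ is unbounded and $U$ need not be bounded, so one cannot quote global regularity results verbatim; however, all the ingredients above are purely local near the boundary or at a prescribed interior point, so this is not a genuine obstacle. The chief step to execute carefully is the smooth extension across $\partial A \cup \partial B$, since everything afterward only requires identifying $q$'s behavior at boundary points with the behavior of its smooth extension.
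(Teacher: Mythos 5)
Your proposal is correct and follows essentially the same route as the paper's proof: elliptic boundary regularity for the smooth extension of $q$, the strong maximum principle for positivity on $\mathring{\D}$, the Hopf lemma at $\partial A$ for the normal derivative, and the observation that $q$ is constant on $\partial A$ so that $\grad q = (\grad q \cdot n)\, n$ there. You simply supply more of the standard local details (boundary straightening, partition of unity, interior ball condition) than the paper chooses to write out.
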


\begin{proof}
  By elliptic regularity, there exists an infinitely differentiable extension of $q$ to an open set containing $\overline{\D}$. We have $\grad q(x)\cdot n(x) >0$ for $x \in \partial A$ by the Hopf Lemma, and $q(x) >0$ in the interior of $\D$ by the maximum principle. Moreover, since $q$ is constant on $\partial A$ and $\grad q(x) \cdot n(x) >0$, $\grad q(x) = \lvert \grad q(x) \rvert n(x)$.  
\end{proof}

\begin{remark}\label{rem: smoothness of q}
  It is conventional to take the domain of the committor to be $\Real^d$ instead of $\D$, letting $q=0$ on $A$ and $q=1$ on $B$. \emph{This extension of $q$ does not coincide with the smooth extension described in Lemma~\ref{lem: properties of committor}}. Any smooth extension must take negative values at some points in the interior of $A$ since $\grad q \cdot n >0$ and $q =0$ on $\partial A$. Moreover, observe that for our smooth extension, $Lq=0$ everywhere in $\bar \D$, including on $\partial A$. In the original work on transition state theory, $Lq$ was instead understood as a distribution supported on $\partial A \cup \partial B$. We require a smooth extension only to simplify the notation in certain proofs. None of our results depends on a particular choice of extension. 
\end{remark}

\section{A Change of Measure Formula for Approximations to the Transition Path Process}
\label{sec: change of measure formula}

We analyze the errors that result when one substitutes an approximation $\tilde q$ of the committor in the transition path equation~\eqref{eq: tpp}. To be precise, in Theorem~\ref{thm: complete change of measure from transition path process to simulated process} below, we present a change of measure formula relating the distribution of solutions of the approximate transition path equation 
\begin{equation}
  \label{eq: approximate tpp}
  \d Y_t = -\grad U (Y_t) \, \d t + 2 \eps \grad \log \tilde q(Y_t) \, \d t + \sqrt{2 \eps}\, \d B_t
\end{equation}
with the exact transition path process. To derive the change of measure, motivated by Lemma~\ref{lem: properties of committor}, we impose the following assumptions on the approximate committor $\tilde q$.

\begin{assumption}\label{asm: properties of approximate committor}
  Let $\tilde q : \D \rightarrow \Real$ be an approximation to the committor with the following properties:
  \begin{enumerate}
  \item $\tilde q$ extends to an infinitely differentiable function defined on an open set containing $\partial A$ and $\partial B$. 
  \item $\tilde q(x) = 0$ for all $x \in \partial A$.
    \item $\grad \tilde q(x) \cdot n(x) >0$ for all $x \in \partial A$ where $n(x)$ is the outward unit normal to $A$ at $x$.
  \end{enumerate}
\end{assumption}

We now show that when an approximate committor $\tilde q$ has the properties outlined above, the ratio $\frac{\tilde q}{q}$ must be smooth in a neighborhood of $\partial A$. This will be the crucial property in deriving sufficient conditions for a change of measure.  

\begin{lemma} \label{lem: ratio function is smooth}
  Let $q$ be the forward committor. Let $\tilde q$ be an approximation to $q$ having the properties outlined in Assumption~\ref{asm: properties of approximate committor}.
  The function
  \begin{equation*}
    r(x) = \frac{\tilde q(x)}{q(x)}
  \end{equation*}
  defined for $x \in \mathring{\D}$ extends to a function in $C^\infty(\overline{\D})$. By abuse of notation, we let $r$ denote both the function and its extension. For $x \in \partial A$, we have
  \begin{equation*}
     r(x) = \frac{\lvert \grad \tilde q(x) \rvert}{\lvert \grad q(x) \rvert} = \frac{\grad \tilde q(x) \cdot n(x)}{\grad q(x) \cdot n(x)}.
  \end{equation*}
\end{lemma}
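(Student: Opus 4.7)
The plan is to split $\overline{\D}$ into two overlapping pieces: the part away from $\partial A$, where the quotient is smooth for trivial reasons, and a tubular neighborhood of $\partial A$, where the apparent singularity must be resolved by factoring out the normal distance from both numerator and denominator.

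On $\overline{\D} \setminus \partial A$, Lemma~\ref{lem: properties of committor} gives $q>0$ on $\mathring{\D}$ and $q=1$ on $\partial B$, so $q$ is locally bounded away from zero there. Hence $r = \tilde q / q$ is locally a quotient of smooth functions with nonvanishing denominator and is itself $C^\infty$ on this set.

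To handle $\partial A$, I would introduce a tubular neighborhood. Because $\partial A$ is a smooth submanifold, there is $\delta>0$, an open set $V \supset \partial A$, and a diffeomorphism $\Phi(y,t) = y + t\, n(y)$ from $\partial A \times (-\delta, \delta)$ onto $V$, where $n$ is the outward unit normal to $A$ (which points into $\D$). Using the smooth extensions provided by Lemma~\ref{lem: properties of committor} and item~1 of Assumption~\ref{asm: properties of approximate committor}, the compositions $q \circ \Phi$ and $\tilde q \circ \Phi$ are smooth functions of $(y,t)$, and both vanish at $t=0$. Hadamard's lemma (applied in the $t$-variable, with $y$ as a smooth parameter) then produces smooth functions $h, \tilde h$ on $\partial A \times (-\delta,\delta)$ with
\[
q(\Phi(y,t)) = t\, h(y,t), \qquad \tilde q(\Phi(y,t)) = t\, \tilde h(y,t),
\]
satisfying $h(y,0) = \partial_t (q \circ \Phi)(y,0) = \grad q(y) \cdot n(y)$ and $\tilde h(y,0) = \grad \tilde q(y) \cdot n(y)$. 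By Lemma~\ref{lem: properties of committor} and part~3 of Assumption~\ref{asm: properties of approximate committor}, both of these quantities are strictly positive, so after shrinking $\delta$ I may assume $h>0$ throughout $V$.

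Defining $r$ on $V$ by $r(\Phi(y,t)) := \tilde h(y,t)/h(y,t)$ gives a $C^\infty$ function on $V$; for $t \neq 0$ it agrees with $\tilde q/q$, hence matches the definition on $\overline{\D} \setminus \partial A$ on the overlap, and the two local definitions patch into a single function in $C^\infty(\overline{\D})$. On $\partial A$ one reads off $r(y) = \tilde h(y,0)/h(y,0) = (\grad \tilde q(y) \cdot n(y))/(\grad q(y) \cdot n(y))$; and because $q$ and $\tilde q$ are both constant on the level set $\partial A$, their gradients there are parallel to $n(y)$, so $\grad q(y) \cdot n(y) = \lvert \grad q(y) \rvert$ and likewise for $\tilde q$, yielding the identities in the statement. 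The only nontrivial step is the Hadamard factorization followed by patching the two local definitions together, both of which are routine once the positivity of the boundary normal derivatives is known.
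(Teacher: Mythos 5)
Your proof is correct and takes essentially the same route as the paper's: both work in a tubular neighborhood of $\partial A$ and factor the normal distance out of $q$ and $\tilde q$ (your Hadamard factorization in the $t$-variable is exactly the first-order Taylor expansion with integral remainder used in the paper), then divide using the positivity of the normal derivatives guaranteed by Lemma~\ref{lem: properties of committor} and Assumption~\ref{asm: properties of approximate committor}. The boundary identity $r=(\grad \tilde q\cdot n)/(\grad q\cdot n)=\lvert\grad \tilde q\rvert/\lvert\grad q\rvert$ is obtained in the same way in both arguments.
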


\begin{proof}
  Under Assumption~\ref{asm: submanifold assumption}, there exists a tubular neighborhood of $\partial A$. That is, there exist an open set $\tub$ containing $\partial A$, an infinitely differentiable retraction $\rho : \tub \rightarrow \partial A$, and an infinitely differentiable mapping $h: \tub \rightarrow \Real$ so that for any $x \in \tub$, 
\begin{equation*}
  x = \rho(x) + h(x) n(\rho(x))
\end{equation*}
where $n(y)$ is the outward unit normal to $A$ at $y \in \partial A$.  To simplify notation, we define
\begin{equation*}
  \bar x := \rho(x),
\end{equation*}
and we sometimes write $h$ and $n$ for $h(x)$ and $n(\rho(x))$. We choose $\tub$ so that $\tub$ and $B$ are disjoint. 

  Since $\tilde q >0$ on $\D \cup \partial B$, $r$ is smooth in an open neighborhood of any point in $\D \cup \partial B$. It will suffice to show that $r$ is smooth up to $\partial A$. 
  Let $x \in \tub$. 
  By Taylor's theorem and Lemma~\ref{lem: properties of committor},
  \begin{align*}
    q(x) &= q(\bar x + h n) \\
         &= q(\bar x) + (\grad q(\bar x) \cdot n)h + h^2 \int_{s=0}^1 n^t D^2 q(\bar x + shn) n (1-s) \, \d s \\
    &= \lvert \grad q(\bar x) \rvert h +  h^2 \int_{s=0}^1 n^t D^2 q(\bar x + shn) n (1-s) \, \d s.
  \end{align*}
  Similarly, we have
  \begin{equation*}
    \tilde q(x) = \lvert \grad \tilde q(\bar x) \rvert h +  h^2 \int_{s=0}^1 n^t D^2 \tilde q(\bar x + shn) n (1-s) \, \d s,
  \end{equation*}
  since the proof of Lemma~\ref{lem: properties of committor} verifies that $\grad \tilde q(\bar x) = \lvert \grad \tilde q(\bar x) \rvert n$ under our assumptions on $\tilde q$.
   Therefore,
  \begin{equation*}
    \frac{\tilde q( x)}{q( x)}
    =
    \frac{\lvert \grad \tilde q(\bar x) \rvert}{\lvert \grad q(\bar x) \rvert}
    \frac{1+ \frac{h}{\lvert \grad \tilde q(\bar x) \rvert}  \int_{s=0}^1 n^t D^2 \tilde q(\bar x + shn) n (1-s) \, \d s }{1+\frac{h}{\lvert \grad q(\bar x) \rvert}  \int_{s=0}^1 n^t D^2  q(\bar x + shn) n (1-s) \, \d s}.
  \end{equation*}
  The result follows since $\bar x$ and $h$ are smooth functions of $x$, $\tilde q$ and $q$ are in $C^\infty(\overline{\D})$, and $\lvert \grad \tilde q(\bar x) \rvert$ and $\lvert \grad \tilde q(\bar x) \rvert$ are positive. 
\end{proof}

Lemma~\ref{lem: ratio function is smooth} implies that $\log r$ and $\grad \log r$ are smooth and bounded in a neighborhood of any point on $\partial A$, since $\lvert \grad q (x) \rvert >0$ for all $x \in \partial A$ by Lemma~\ref{lem: properties of committor}. If $\grad \log r$ is in fact globally bounded, then the Novikov condition stated in Assumption~\ref{asm: novikov} below holds, and Girsanov's theorem guarantees a change of measure. 

\begin{assumption}\label{asm: novikov}
   For any $t >0$, we have the Novikov condition
  \begin{equation*}
    \EQ \left [ \exp \left (\eps \int_0^t \lvert \grad \log r(X_s) \rvert^2 \, \d s \right ) \right ] < \infty.
  \end{equation*}
\end{assumption}

Establishing or assuming a global bound on $\grad \log r$ seems to us to be the most practical means of verifying the Novikov condition and the existence of a change of measure. For example, a global bound on $\grad \log r$ follows from Assumption~\ref{asm: properties of approximate committor} and Lemma~\ref{lem: ratio function is smooth} when $\D$ is bounded.  We note, however, that a global bound is only a convenient sufficient condition, not necessary, for the results below. 


 To state our change of measure, we require some additional notation to properly specify the probability space on which the exact and approximate transition path processes are defined. Let $B_t$ be a $d$-dimensional standard Brownian motion on the probability space $(\Omega, \F, \Q)$, and suppose that $Y_0$ is a random variable on $(\Omega, \F)$ taking values in $\overline{\D}$ and independent of $B_t$. For now, we do not make any assumptions regarding the distribution of $Y_0$, but, in Theorem~\ref{thm: complete change of measure from transition path process to simulated process} below, $Y_0$ will have the reactive flux distribution~\eqref{eq: reactive flux distribution}. Let $\F_t$ be the filtration generated by $B_t$ and $Y_0$. In~\cite{lu_reactive_2015}, it is shown that a strong solution of
\begin{equation}\label{eq: sde for tpp}
  dY_t = -\grad U(Y_t) \, \d t + 2 \eps \grad \log q(Y_t) \, \d t + \sqrt{2\eps} \, \d B_t
\end{equation}
exists with initial value $Y_0$. 
Under our assumptions, Girsanov's theorem yields a change of measure relating the transition path process~\eqref{eq: sde for tpp} to a weak solution of the approximate transition path equation
\begin{equation*}
  \d Y_t = -\grad U (Y_t) \, \d t + 2 \eps \grad \log \tilde q(Y_t) \, \d t + \sqrt{2 \eps} \, \d B_t.
\end{equation*}

\begin{lemma}\label{lem: impractical general change of measure formula}
 Let Assumptions~\ref{asm: submanifold assumption},~\ref{asm: smoothness of potential},~\ref{asm: properties of approximate committor}, and~\ref{asm: novikov} hold. Let $Y_t$ and $Y_0$ be as described in the previous paragraph. The process
  \begin{equation}\label{eqn: first formula for z}
    \zed_t := \exp \left ( \int_0^t \sqrt{2 \eps} \grad \log r(Y_s) \cdot \d B_s - \frac12 \int_0^t \lvert \sqrt{2 \eps} \grad \log r(Y_s) \rvert^2 \, \d s \right )
  \end{equation}
  is a positive $\F_t$-martingale under $\Q$ with $\EQ [\zed_t] = 1$ for all $t\geq 0$. Therefore, there exists a probability measure $\P$ on $(\Omega, \F)$ so that for any fixed $T>0$,
  \begin{equation*}
    \left . \frac{\d \P}{\d \Q} \right \rvert_{\F_T} = \zed_T.
  \end{equation*}
  Define the process $W_t$  by
  \begin{equation*}
     \sqrt{2 \eps} W_t := Y_t - Y_0 - \int_0^t -\grad U(Y_s) + 2 \eps\grad \log \tilde q(Y_s) \, \d s
   \end{equation*}
   so that $Y_t$ solves
   \begin{equation*}
    \d Y_t =  -\grad U(Y_t) \, \d t + 2 \eps \grad \log \tilde q(Y_t) \, \d t + \sqrt{2 \eps} \, \d W_t.
  \end{equation*}
  Under $\P$, $\{W_t\}_{t=0}^T$ has the law of a standard Brownian motion.
\end{lemma}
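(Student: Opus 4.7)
The plan is to apply Girsanov's theorem, treating
\[\theta_s := \sqrt{2\eps}\grad\log r(Y_s)\]
as the drift to be removed. The motivation is that the drift difference between the exact SDE~\eqref{eq: sde for tpp} and the approximate transition path equation~\eqref{eq: approximate tpp} is $2\eps(\grad\log\tilde q - \grad\log q) = 2\eps\grad\log r$. Before invoking Girsanov I need the stochastic integral in~\eqref{eqn: first formula for z} to make sense. Lemma~\ref{lem: ratio function is smooth} gives $r\in C^\infty(\overline{\D})$ with $r>0$ on $\partial A$, so $\grad\log r$ is continuous in a neighborhood of $\partial A$; elsewhere in $\overline{\D}$ the strict positivity of both $q$ (Lemma~\ref{lem: properties of committor}) and $\tilde q$ makes $\grad\log r$ continuous as well. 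Since $Y_s$ is $\overline{\D}$-valued and $\F_s$-adapted, the integrand $\theta_s$ is continuous and adapted, so the It\^o integral is well-defined.

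Next, by It\^o's formula $\zed_t$ is the stochastic exponential of $\int_0^\cdot \theta_s\cdot\d B_s$, hence a positive continuous local martingale with $\zed_0=1$. The upgrade to a true martingale with $\Q[\zed_t]=1$ for all $t\ge 0$ is the standard consequence of Novikov's condition applied to $\theta_s$, and Assumption~\ref{asm: novikov} is precisely that condition (with the factor $\frac12 \cdot 2\eps = \eps$ absorbed in front of $|\grad\log r|^2$). The probability measure $\P$ with $\d\P/\d\Q\rvert_{\F_T} = \zed_T$ then exists by the standard construction.

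Finally, I would identify $W_t$ with the new Brownian motion produced by Girsanov. The theorem gives that under $\P$ the process
\[\tilde B_t := B_t - \int_0^t \theta_s\,\d s = B_t - \sqrt{2\eps}\int_0^t \grad\log r(Y_s)\,\d s\]
is a standard Brownian motion on $[0,T]$ in the filtration $\F_t$. Substituting the exact SDE~\eqref{eq: sde for tpp} for $Y_t$ into the definition of $\sqrt{2\eps}\,W_t$ and using $\grad\log q - \grad\log\tilde q = -\grad\log r$, the $\grad U(Y_s)$ terms cancel and I obtain
\[\sqrt{2\eps}\,W_t = \sqrt{2\eps}\,B_t - 2\eps\int_0^t \grad\log r(Y_s)\,\d s.\]
Dividing by $\sqrt{2\eps}$ gives $W_t = \tilde B_t$, so $W_t$ is a standard Brownian motion under $\P$ as claimed.

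The only delicate point in this program is the behavior of $\grad\log r$ near $\partial A$, where the individual drifts $\grad\log q$ and $\grad\log\tilde q$ are singular. That issue is handled entirely by Lemma~\ref{lem: ratio function is smooth}, since the singularities cancel in the ratio $r = \tilde q/q$. With that lemma and the Novikov assumption in hand, everything else is a routine invocation of the classical Girsanov theorem.
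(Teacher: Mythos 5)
Your proposal is correct and follows essentially the same route as the paper, which simply observes that the lemma is Girsanov's theorem applied to $\theta_s=\sqrt{2\eps}\,\grad\log r(Y_s)$ under the Novikov condition of Assumption~\ref{asm: novikov}, with Lemma~\ref{lem: ratio function is smooth} ensuring the integrand is well behaved despite the singular individual drifts. Your additional algebra identifying $W_t$ with the Girsanov Brownian motion (using $\grad\log q-\grad\log\tilde q=-\grad\log r$) just makes explicit what the paper leaves implicit.
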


\begin{proof}
 \change{The above merely restates Girsanov's theorem; cf.\@\cite[Section~8.6]{oksendal_stochastic_2003}}. Observe that Girsanov's theorem holds even when the drift is singular. Only some form of Novikov's condition is required. 
\end{proof}

\begin{remark}[Weak Solutions of the Approximate Transition Path Equation]
Lemma~\ref{lem: impractical general change of measure formula} establishes the existence of at least a weak solution to 
\begin{equation*}
  \d Y_t =  -\grad U(Y_t)\, \d t + 2 \eps \grad \log \tilde q(Y_t) \, \d t + \sqrt{2 \eps} \, \d B_t
\end{equation*}
with arbitrary initial distribution supported on $\partial A$ even though $\grad \log \tilde q$ is singular on $\partial A$. We do not verify the existence of strong solutions. However, we suspect that one could do so based on arguments roughly similar to those used in~\cite{lu_reactive_2015} to construct strong solutions of the transition path equation~\eqref{eq: tpp}.
\end{remark}

We now derive a formula for the change of measure that can be computed in practice. Observe that formula~\eqref{eqn: first formula for z} for $\zed_t$ cannot be computed without knowledge of the exact committor $q$. As a first step towards eliminating this dependence on the committor, we show that $\log q$ solves a Hamilton--Jacobi--Bellman equation. Results of this nature are standard; we include a proof only for the reader's convenience. 

\begin{lemma}\label{lem: hjb equation}
  For $x \in \mathring{\D}$,
  \begin{align*}
    0 &=-\grad U (x) \cdot \grad \log q(x) + \eps \lap \log q(x) + \eps \lvert \grad \log q (x) \rvert^2 \\
      &= \L \log q (x) + \eps \lvert \grad \log q(x) \rvert^2.
  \end{align*}
\end{lemma}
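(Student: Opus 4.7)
The plan is to derive the identity by a direct calculation starting from the elliptic equation $Lq = 0$ in $\mathring{\D}$, which was recorded in~\eqref{eq: elliptic commitor equation}, and using that $q > 0$ on $\mathring{\D}$ by Lemma~\ref{lem: properties of committor} so that $\log q$ is well-defined and smooth there.

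The key step is to re-express $\grad q$ and $\lap q$ in terms of $\log q$. From $\grad \log q = \grad q / q$ I get $\grad q = q \, \grad \log q$, and then
\begin{equation*}
  \lap q = \div(\grad q) = \div(q \, \grad \log q) = \grad q \cdot \grad \log q + q \lap \log q = q\bigl(|\grad \log q|^2 + \lap \log q\bigr).
\end{equation*}
Substituting these expressions into $Lq = -\grad U \cdot \grad q + \eps \lap q$ and factoring out $q$ yields
\begin{equation*}
  Lq = q \bigl(-\grad U \cdot \grad \log q + \eps \lap \log q + \eps |\grad \log q|^2\bigr).
\end{equation*}
Since $Lq = 0$ on $\mathring{\D}$ and $q > 0$ there, I can divide through by $q$ to conclude.

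There is essentially no obstacle here: the identity is a pointwise algebraic consequence of the product/chain rule for $\div$ and $\grad$ applied to $q = \exp(\log q)$, combined with positivity of $q$ in the interior. The only thing to verify is that the manipulations are justified, which follows from the smoothness of $q$ asserted in Lemma~\ref{lem: properties of committor} and the strict positivity of $q$ on $\mathring{\D}$. The second equality in the statement is then just the definition $L = -\grad U \cdot \grad + \eps \lap$ applied to $\log q$.
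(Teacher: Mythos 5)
Your proof is correct and is essentially the same argument as the paper's: the paper sets $m=\log q$, writes $q=\exp(m)$, and expands $L\exp(m)=\exp(m)(-\grad U\cdot\grad m+\eps\lap m+\eps\lvert\grad m\rvert^2)$ before dividing by the positive factor, which is the same chain-rule computation you perform in the opposite direction.
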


\begin{proof}
   Define 
  \begin{equation*}
    m(x):= \log q(x)
  \end{equation*}
  for $x \in \D$. We have 
  \begin{align*}
    0&= \L q \\
     &= -\grad U \cdot \grad q + \eps \lap q \\
     &= - \grad U \cdot \grad \exp(m ) + \eps \lap \exp(m) \\
     &= \exp(m) ( -\grad U  \cdot \grad m + \eps \lap m + \eps \lvert \grad m\rvert^2 )
  \end{align*}
  on $x \in \mathbb{R}^d \setminus (A \cup B)$, which implies the result since $\exp(m)$ must be positive.
\end{proof}

We use the Hamilton--Jacobi--Bellman equation in Lemma~\ref{lem: hjb equation} to eliminate $q$ from the exponential factor in formula~\eqref{eqn: first formula for z} for $\zed_t$. 

\begin{lemma}\label{lem: second impractical change of measure formula}
  Under the hypotheses of Lemma~\ref{lem: impractical general change of measure formula}, and assuming $Y_0$ takes values in $\partial A$,  we have
  \begin{align}
    \zed_t&= \frac{\lvert \grad  q (Y_0) \rvert}{\lvert   \grad \tilde q (Y_0) \rvert} \frac{\tilde q(Y_t)}{q(Y_t)} \exp \left (-\int_0^t  \L \log \tilde q (Y_s)  + \eps  \lvert \grad \log \tilde q(Y_s) \rvert^2 \, \d s  \right ) \nonumber \\
    &= \frac{\lvert \grad  q (Y_0) \rvert}{\lvert \grad \tilde q (Y_0) \rvert} \frac{\tilde q(Y_t)}{q(Y_t)} \exp \left (-\int_0^t \frac{L \tilde q}{\tilde q}(Y_s) \, \d s  \right ) \label{eqn: second formula for zt}
  \end{align}
  for $t>0$ and $\zed_0=1$. 
\end{lemma}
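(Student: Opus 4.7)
The plan is to apply Itô's formula to $\log r(Y_t)$ with $Y_t$ solving the exact transition path equation~\eqref{eq: sde for tpp}, solve the resulting identity for the stochastic integral in~\eqref{eqn: first formula for z}, and then eliminate all reference to the unknown $q$ from the integrand by invoking the Hamilton--Jacobi--Bellman equation of Lemma~\ref{lem: hjb equation}.

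Writing $L = -\grad U \cdot \grad + \eps \lap$ for the generator, Itô's formula gives
\begin{equation*}
  \d \log r(Y_t) = L \log r(Y_t)\, \d t + 2 \eps\, \grad \log r(Y_t) \cdot \grad \log q(Y_t)\, \d t + \sqrt{2\eps}\, \grad \log r(Y_t) \cdot \d B_t.
\end{equation*}
Solving for the stochastic integral and substituting into the exponent of~\eqref{eqn: first formula for z}, the log of $\zed_t$ becomes
\begin{equation*}
  \log r(Y_t) - \log r(Y_0) - \int_0^t \bigl[ L \log r + 2\eps\, \grad \log r \cdot \grad \log q + \eps \lvert \grad \log r \rvert^2 \bigr](Y_s)\, \d s.
\end{equation*}
Expanding each occurrence of $\log r$ as $\log \tilde q - \log q$, the mixed terms $2\eps\, \grad \log \tilde q \cdot \grad \log q$ cancel, and the integrand collapses to $[L \log \tilde q + \eps \lvert \grad \log \tilde q \rvert^2] - [L \log q + \eps \lvert \grad \log q \rvert^2]$. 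By Lemma~\ref{lem: hjb equation} the second bracket vanishes pointwise in $\mathring{\D}$, leaving only the $\tilde q$ contribution.

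For the initial prefactor, since $Y_0 \in \partial A$, Lemma~\ref{lem: ratio function is smooth} identifies $r(Y_0) = \lvert \grad \tilde q(Y_0) \rvert / \lvert \grad q(Y_0) \rvert$, which matches the ratio in~\eqref{eqn: second formula for zt}. The second equality in~\eqref{eqn: second formula for zt} follows from the pointwise identity $Lf / f = L \log f + \eps \lvert \grad \log f \rvert^2$, valid for any smooth positive $f$ by exactly the computation in the proof of Lemma~\ref{lem: hjb equation}, applied here with $f = \tilde q$ on $\mathring{\D}$. The main technical obstacle I anticipate is justifying Itô's formula in the presence of the singular drift $2 \eps \grad \log q$ on $\partial A$ and the possible initial condition $Y_0 \in \partial A$: while $\log r$ extends smoothly across $\partial A$ by Lemma~\ref{lem: ratio function is smooth}, the individual quantities $L \log q$ and $\lvert \grad \log q \rvert^2$ are singular and only cancel after the HJB step. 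I would handle this by applying Itô's formula on $[s, T]$ for $s > 0$, where $Y_s \in \mathring{\D}$ almost surely by~\cite{lu_reactive_2015}, and then pass to $s \downarrow 0$ using continuity of $\log r(Y_\cdot)$ together with the fact that $\zed_t$ is a continuous martingale satisfying $\zed_0 = 1$.
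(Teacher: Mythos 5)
Your proposal is correct and follows essentially the same route as the paper: apply It\^o's formula to $\log r(Y_t)$, substitute for the stochastic integral in~\eqref{eqn: first formula for z}, use the Hamilton--Jacobi--Bellman equation of Lemma~\ref{lem: hjb equation} to cancel the $q$-dependent part of the integrand, identify $r(Y_0)=\lvert \grad \tilde q(Y_0)\rvert/\lvert \grad q(Y_0)\rvert$ on $\partial A$ via Lemma~\ref{lem: ratio function is smooth}, and pass to the second formula with the identity $L\tilde q/\tilde q = L\log\tilde q + \eps\lvert\grad\log\tilde q\rvert^2$. Your additional care in applying It\^o's formula on $[s,T]$ with $s>0$ and letting $s\downarrow 0$ addresses the singularity at $\partial A$ more explicitly than the paper, which simply notes that $Y_t\in\mathring{\D}$ almost surely for $t>0$, but this is a refinement of the same argument rather than a different one.
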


\begin{proof}
  First, we derive an alternative expression for the integral with respect to $\d B_s$ that appears in formula~\eqref{eqn: first formula for z} for $\zed_t$.  By the Ito formula, for any $g \in C^2(\Real^d ; \Real)$, we have
  \begin{align*}
    \d g(Y_t) &= \grad g(Y_t)^t \d Y_t + \frac12 \d Y_t^t D^2g(Y_t) \d Y_t \\
              &= \L g (Y_t) \, \d t + 2 \eps \grad \log q(Y_t)^t \grad g(Y_t) \, \d t +  \sqrt{2 \eps}\grad g(Y_t)^t \d B_t,
  \end{align*}
  so
  \begin{align*}
    \sqrt{2 \eps} \int_0^t \grad g(Y_s)^t \d B_s = g(Y_t) - g(Y_0) - \int_0^t   \L g (Y_s) + 2 \eps \grad \log q(Y_s)^t \grad g(Y_s) \, \d s.
  \end{align*}
  Applying the above with $g = \log r$ and using Lemma~\ref{lem: ratio function is smooth}, we have for $t >0$ that 
  \begin{align*}
    \zed_t &= \frac{r(Y_t)}{r(Y_0)} \exp \left (-\int_0^t  \L \log r(Y_s) + 2 \eps \grad \log q(Y_s)^t  \grad \log r(Y_s) + \eps \lvert \grad \log r(Y_s) \rvert^2 \, \d s  \right ) \\
    &= \frac{\lvert \grad  q (Y_0) \rvert}{\lvert  \tilde \grad q (Y_0) \rvert} \frac{\tilde q(Y_t)}{q(Y_t)} \\
    &\quad \times
           \exp \left (-\int_0^t  \L \log r(Y_s) + 2 \eps \grad \log q(Y_s)^t  \grad \log r(Y_s) + \eps \lvert \grad \log r(Y_s) \rvert^2 \, \d s  \right ). 
  \end{align*}
  By Lemma~\ref{lem: hjb equation},
  \begin{align*}
    &\L \log r + 2 \eps \grad \log q \cdot \grad \log r + \eps \lvert \grad \log r \rvert^2 \\
    &\qquad = \L \log \tilde q - \L \log q + 2 \eps \grad  \log q \cdot \grad \log \tilde q - 2 \eps \lvert \grad \log q \rvert^2 \\
    &\qquad \qquad+ \eps \lvert \grad \log \tilde q \rvert^2 + \eps \lvert \grad \log q \rvert^2 - 2 \eps \grad  \log q \cdot \grad \log \tilde q \\
    &\qquad = \L \log \tilde q + \frac12 \lvert \grad \log \tilde q \rvert^2.
  \end{align*}
  Thus, 
  \begin{equation*}\label{eq: first formula for zxt}
    \zed_t= \frac{\lvert \grad  q (Y_0) \rvert}{\lvert  \tilde \grad q (Y_0) \rvert} \frac{\tilde q(Y_t)}{q(Y_t)} \exp \left (-\int_0^t  \L \log \tilde q (Y_s)  + \eps \lvert \grad \log \tilde q(Y_s) \rvert^2 \, \d s  \right ).
  \end{equation*}
  Here, for $t>0$, we have $Y_t \in \mathring{\D}$ with probability one, so $q(Y_t) >0$, and the right-hand-side of the equation above is well-defined.  In addition, $\zed_0=1$ by~\eqref{eqn: first formula for z}. This verifies the first formula for $\zed_t$ in the statement of the lemma.
  The second formula follows from the first and the identity
  \begin{align*}
    L \log \tilde q &= \eps \lap \log  \tilde q - \grad U \cdot \grad \log \tilde q \\
    &= \frac{L \tilde q}{\tilde q}  - \eps  \lvert \grad \log \tilde q \rvert^2.
  \end{align*}
\end{proof}

We must now eliminate the factors $\lvert \grad q(Y_0) \rvert$ and $q(Y_t)$ from the change of measure formula~\eqref{eqn: second formula for zt}. To get rid of $q(Y_t)$, we simply stop observing the process at the first hitting time $\tau$ of $B$; note that $Y_\tau \in \partial B$, so $q(Y_\tau)= 1$. Observing the process up to time $\tau$ corresponds to restricting the measures $\P$ and $\Q$ to the stopping time $\sigma$-algebra defined below.

\begin{definition}
  Let $\tau$ be the first hitting time of $B$ for the process $Y_t$ solving~\eqref{eq: sde for tpp}. Let $\F_\tau$ be the stopping time $\sigma$-algebra of $\tau$, i.e.
  \begin{equation*}
    \F_\tau := \{ A \in \F: A \cap \{ \tau \leq t\} \in \F_t \text{ for all } t \geq 0 \}.
  \end{equation*}
\end{definition}

For Theorem~\ref{thm: complete change of measure from transition path process to simulated process} below to hold, $\tau$ must be finite for both the exact and approximate transition path processes. Assumption~\ref{asm: ergodicity} on the ergodicity of the overdamped Langevin dynamics implies that the exact transition path process will hit $B$ with probability one, since $B$ has positive Lebesgue measure. That is, $\Q[\tau < \infty] = 1$. We assume that $\tau$ is finite for the approximate process as well. 

\begin{assumption}\label{asm: finite hitting time}
  We assume that 
  \begin{equation*}
     \P[\tau < \infty] = 1
  \end{equation*}
  for any initial value $Y_0$ supported in $\overline{\D}$. Recall that $\P$ is the measure introduced in Lemma~\ref{lem: impractical general change of measure formula} under which $Y_t$ is a weak solution of the approximate transition path equation~\eqref{eq: approximate tpp}.
\end{assumption}

We eliminate $\lvert \grad q(Y_0)\rvert$ from~\eqref{eqn: second formula for zt} by taking $Y_0$ to have the reactive flux distribution under $\Q$. In that case, the factor $\lvert \grad q(Y_0) \rvert$ in the reactive flux~\eqref{eq: reactive flux distribution} cancels with the one in~\eqref{eqn: second formula for zt}. Everything that remains in the change of measure can be computed without knowing the exact committor $q$, except for a normalizing constant. The result is formula~\eqref{eq: computable change of measure} in Theorem~\ref{thm: complete change of measure from transition path process to simulated process}.

\begin{theorem}\label{thm: complete change of measure from transition path process to simulated process}
  Let Assumptions~\ref{asm: ergodicity}-~\ref{asm: finite hitting time} hold. 
  Let $B_t$ be a $d$-dimensional standard Brownian motion on the probability space $(\Omega, \F, \Q)$. Let $Y_0$ be a random variable on $(\Omega, \F)$ that has the reactive flux distribution 
  \begin{equation*}
    Y_0 \sim  \frac{1}{\znu} \lvert  \grad q(x) \rvert \exp(-U(x)/\eps) \, \d S_A(x),
  \end{equation*}
  and assume that $Y_0$ is independent of $B_t$ under $\Q$. Let $\F_t$ be the filtration generated by $B_t$ and $Y_0$. Suppose that $Y_t$ solves the transition path equation
  \begin{equation*}
    \d Y_t = -\grad U(Y_t) \, \d t + 2 \eps \grad \log q(Y_t) \, \d t + \sqrt{2 \eps} \,  \d B_t
  \end{equation*}
  with initial condition $Y_0$. Now let $\tilde q$ be an approximation to the committor $q$, and let $m: \partial A \rightarrow [0,\infty)$ be an unnormalized density over $\partial A$ that approximates the reactive flux distribution.
    Let
  \begin{equation*}
    \zmu = \int_{\partial A} m(x) \,  \d S_A (x)
  \end{equation*}
  be the normalizing constant of $m$. 
  For $t >0$, define
  \begin{equation*}
    Z_t := \frac{\znu}{\zmu} \frac{\tilde q(Y_t)}{q(Y_t)} \frac{ m(Y_0)}{\lvert \grad \tilde q (Y_0) \rvert \exp(-U(Y_0)/\eps)}\exp \left (-\int_0^t  \frac{L \tilde q}{\tilde q}(Y_s)\, \d s \right ),
  \end{equation*}
  and let
  \begin{equation*}
    Z_0 :=  \frac{\znu}{\zmu} \frac{ m(Y_0)}{\lvert \grad q (Y_0) \rvert \exp(-U(Y_0)/\eps)}.
  \end{equation*}
  The process $Z_t$ is a nonnegative $\F_t$-martingale with $\EQ[ Z_t]=1$ for all $t \geq 0$, so there exists a probability measure $\P$ on $(\Omega, \F)$ with
  \begin{equation*}
    \left . \frac{\d \P}{\d \Q} \right \rvert_{\F_t} = Z_t
  \end{equation*}
  for all $t \geq 0$. 
  Under $\P$,
  \begin{equation*}
    W_t :=   Y_{t} - Y_0 - \int_0^{t} - \grad U (Y_s) + 2 \eps \grad \log \tilde q(Y_s) \, \d s
  \end{equation*}
  is a Brownian motion and $Y_0 \sim \zmu^{-1} m(x) \, \d S_A(x)$. 
  The density of $\P$ restricted to the stopping time $\sigma$-algebra $\F_\tau$ is
  \begin{equation}\label{eq: computable change of measure}
     \left . \frac{\d \P}{\d \Q} \right \rvert_{\F_\tau} = Z_\tau =  \frac{\znu}{\zmu} \tilde q(Y_\tau) \frac{ m(Y_0)}{\lvert \grad \tilde q (Y_0) \rvert \exp(-U(Y_0)/\eps)}\exp \left (-\int_0^\tau  \frac{L \tilde q}{\tilde q}(Y_s)\, \d s \right ).
  \end{equation}
\end{theorem}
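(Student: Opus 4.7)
The plan is to assemble $Z_t$ as the product of the Girsanov martingale $\zed_t$ from Lemma~\ref{lem: second impractical change of measure formula} with an $\F_0$-measurable factor that reweights the initial distribution. Concretely, set
\begin{equation*}
    Z_0 = \frac{\nu}{\mu}\frac{m(Y_0)}{\lvert \grad q(Y_0)\rvert \exp(-U(Y_0)/\eps)},
\end{equation*}
and observe, using the identity $\lvert \grad \tilde q(Y_0)\rvert^{-1}\tilde q(Y_t)/q(Y_t) = \lvert \grad q(Y_0)\rvert^{-1} \cdot \lvert \grad q(Y_0)\rvert/\lvert \grad \tilde q(Y_0)\rvert \cdot \tilde q(Y_t)/q(Y_t)$ and the formula for $\zed_t$ from Lemma~\ref{lem: second impractical change of measure formula}, that $Z_t = Z_0\,\zed_t$ for all $t\geq 0$. (The $t = 0$ identification uses Lemma~\ref{lem: ratio function is smooth} to make sense of $\tilde q(Y_0)/q(Y_0)$ on $\partial A$ as $\lvert \grad \tilde q(Y_0)\rvert/\lvert \grad q(Y_0)\rvert$.)

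First I would verify that $Z_t$ is a nonnegative $\F_t$-martingale with $\Q[Z_t] = 1$. Because $Z_0$ is $\F_0$-measurable and $\zed_t$ is a $\Q$-martingale with $\zed_0 = 1$, the product $Z_0 \zed_t$ satisfies $\E_\Q[Z_t \mid \F_s] = Z_0\,\E_\Q[\zed_t\mid \F_s] = Z_0\,\zed_s = Z_s$. The normalization then reduces to checking $\Q[Z_0] = 1$, which is a direct integration against the reactive flux density of $Y_0$: the factors $\lvert \grad q\rvert$ and $\exp(-U/\eps)$ cancel, leaving $\mu^{-1}\int_{\partial A} m\,\d S_A = 1$. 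The measure $\P$ is then obtained as the consistent extension of the densities $\P|_{\F_t} = Z_t\,\Q|_{\F_t}$, exactly as in Lemma~\ref{lem: impractical general change of measure formula}. That $W_t$ is a $\P$-Brownian motion follows from Girsanov's theorem applied to $\zed_t$ (the extra $\F_0$-measurable factor $Z_0$ does not affect the quadratic variation or the drift compensation), and the distribution $Y_0 \sim \mu^{-1} m \, \d S_A$ under $\P$ is a one-line computation using the density $Z_0$ against the reactive flux distribution.

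The last task is the restriction to $\F_\tau$. Here I would apply optional stopping to the bounded stopping times $\tau\wedge n$, obtaining $\E_\Q[Z_{\tau\wedge n}]=1$ and, for any $A\in\F_\tau$, $\P(A\cap\{\tau\leq n\}) = \E_\Q[Z_{\tau\wedge n}\mathds{1}_{A\cap\{\tau\leq n\}}] = \E_\Q[Z_\tau \mathds{1}_{A\cap\{\tau\leq n\}}]$. Letting $n\to\infty$ and using $\Q[\tau<\infty]=1$ (from ergodicity, Assumption~\ref{asm: ergodicity}) together with monotone convergence gives $\P(A\cap\{\tau<\infty\}) = \E_\Q[Z_\tau\mathds{1}_A]$. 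Taking $A=\Omega$ together with Assumption~\ref{asm: finite hitting time} yields $\E_\Q[Z_\tau]=1$, so $Z_\tau$ is genuinely the density of $\P|_{\F_\tau}$ with respect to $\Q|_{\F_\tau}$. Finally, since $Y_\tau\in\partial B$ on $\{\tau<\infty\}$ and $q\equiv 1$ on $\partial B$, the ratio $\tilde q(Y_\tau)/q(Y_\tau)$ simplifies to $\tilde q(Y_\tau)$, yielding the advertised formula~\eqref{eq: computable change of measure}.

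The main obstacle is the last step. The product formula $Z_t = Z_0\zed_t$ and the identification of $W_t$ are essentially bookkeeping on top of Lemmas~\ref{lem: impractical general change of measure formula} and~\ref{lem: second impractical change of measure formula}. What requires genuine care is that $\tau$ is an unbounded stopping time and $Z$ is not \emph{a priori} uniformly integrable, so one cannot quote optional stopping directly at $\tau$; Assumption~\ref{asm: finite hitting time} enters precisely to upgrade $\E_\Q[Z_{\tau\wedge n}]=1$ to $\E_\Q[Z_\tau]=1$ and to license passing the $\F_\tau$ density through the limit.
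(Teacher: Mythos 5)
Your proposal is correct and takes essentially the same approach as the paper: factor $Z_t = Z_0\,\zed_t$ with $\zed_t$ the martingale of Lemma~\ref{lem: second impractical change of measure formula}, obtain $\Q[Z_t]=1$ by conditioning on $\F_0$ and cancelling against the reactive flux density, and identify $\left.\frac{\d\P}{\d\Q}\right\rvert_{\F_\tau}=Z_\tau$ via optional stopping at the bounded times $\tau\wedge n$ followed by a limit using $\P[\tau<\infty]=\Q[\tau<\infty]=1$. The paper's appendix proof is the same argument with only cosmetic differences (it conditions $Z_N$ on $\F_{\tau\wedge N}$ over the sets $A\cap\{\tau<N\}$ rather than inserting indicators).
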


\begin{proof}
See Appendix~\ref{apx: change of measure formula}.
\end{proof}

\section{Applications of the Change of Measure}
\label{sec: applications}

In this section, we consider two applications of the change of measure formula: \emph{selection} and \emph{training}. By training, we mean the minimization of the relative entropy $\dkl ( \P \Vert \Q)$ of the approximate transition path process with respect to the exact process over a family of approximate committor functions. By selection, we mean the comparison of different approximations to the committor function based on relative entropy differences. This is somewhat similar to model selection based on the Bayes and Akaike information criteria except that we do not consider penalizing model complexity. We also speculate on two other possibilities: \emph{error estimation} and \emph{importance sampling}. By error estimation, we mean the direct estimation of $\dkl(\P \Vert \Q)$ given a sample of trajectories from the approximate transition path process. By importance sampling, we mean the unbiased estimation of observables of the exact process given a sample of paths of the approximate process. Error estimation and importance sampling entail theoretical difficulties that we cannot resolve at this time, but see Section~\ref{sec: numerical experiments} for a simple example where both strategies appear to work well.

\subsection{Relative Entropy}
\label{sec: relative entropy in path space}

In general, if $P$ and $Q$ are probability measures on the same measurable space, one defines the \emph{relative entropy} (or Kullback--Leibler divergence) of $P$ with respect to $Q$ as follows.

\begin{definition}[Relative entropy]
  If $P$ and $Q$ are  probability measures on $(S, \mathscr{S})$ and $P$ is absolutely continuous with respect to $Q$, we define the \emph{relative entropy} of $P$ with respect to $Q$ by
  \begin{equation*}
    \dkl(P \Vert Q) :=\E_Q \left [ \frac{\d P}{\d Q} \log \left ( \frac{\d P}{\d Q} \right )  \right ] =\E_P \left [\log \left ( \frac{\d P}{\d Q} \right ) \right ]. 
  \end{equation*}
  If $P$ is not absolutely continuous with respect to $Q$, $\dkl(P \Vert Q) := \infty$. 
\end{definition}

The relative entropy is widely used in machine learning and statistics to measure discrepancies between distributions. In particular, maximum likelihood estimation can be understood as relative entropy minimization. The relative entopy is not symmetric and the triangle inequality does not hold, so it is not a metric. However, $\dkl (P \Vert Q) \geq 0$ for all $P$ and $Q$, and $\dkl (P \Vert Q)=0$ implies $P =Q$. Moreover, $\dkl (P \Vert Q)$ relates to some well-known metrics. For example, Pinsker's inequality bounds the total variation distance in terms of $\dkl$:
\begin{equation*}
  \lVert P-Q \rVert_{\mathrm{TV}}= \max_{A \in \mathscr{S}} \, \lvert P(A)- Q(A) \rvert \leq \sqrt{\frac12 \dkl(P \Vert Q)}.
\end{equation*}

By abuse of notation, throughout the rest of this work, we will write $\P$ and $\Q$ for the restrictions of $\P$ and $\Q$ to the stopping time $\sigma$-algebra $\F_\tau$. In other words, we will stop observing the process $Y_t$ when it hits $B$ at time $\tau$. 
By formula~\eqref{eq: computable change of measure} in Theorem~\ref{thm: complete change of measure from transition path process to simulated process}, we have
\begin{align}
  \dkl(\P \Vert \Q) 
                     &= \EP \left [ \log \left ( \frac{\d \P}{\d \Q} \right ) \right ] \nonumber \\
                     &= \log \znu- \log \zmu \nonumber \\
                     &\qquad + \int_{\partial A} \log \left ( \frac{ m(x)}{\lvert \grad  \tilde q (x) \rvert \exp(-U(x)/\eps)} \right ) \frac{m(x)}{\zmu} \, \d S_A(x) \nonumber  \\
                     &\qquad + \EP \left [\log \tilde q (Y_\tau)-\int_0^\tau  \frac{L  \tilde q}{\tilde q}(Y_s)\, \d s  \right ] \label{eqn: entropy of p given q}
\end{align}
and
\begin{align}
  \dkl( \Q \Vert \P) &= \EQ \left [ \log \left ( \frac{\d \Q}{\d \P} \right ) \right ] \nonumber \\
                     &=\log \zmu - \log \znu \nonumber \\
                     &\qquad +   \int_{\partial A} \log \left ( \frac{\lvert \grad  \tilde q (x) \rvert \exp(-U(x)/\eps)}{ m(x)} \right ) \frac{\lvert \grad q(x) \rvert \exp(-U(x)/\eps)}{\znu} \, \d S_A(x) \nonumber \\
                     &\qquad + \EQ \left [  \int_0^\tau \frac{L \tilde q}{\tilde q}(Y_s) \, \d s - \log \tilde q (Y_\tau) \right ]. \label{eqn: entropy of q given p}
\end{align}
Both of these relative entropies measure the discrepancy between the exact distribution of transition paths and the distribution with an approximate committor function $\tilde q$ in place of $q$.

These formulas simplify under some conditions. First, note that $\log \tilde q(Y_\tau)=0$ when we impose $\tilde q =1$ on $\partial B$. Second, note that the integrals over $\partial A$ vanish when 
\begin{equation*}
  m(x) = \lvert \grad \tilde q(x) \rvert \exp(-U(x) / \eps).
\end{equation*}
Of course, both conditions hold for the exact committor and reactive flux distribution, and one could impose either in practice when computing approximate committor functions.   

\begin{remark}[Which Entropy?]
In the present work, we focus on $\dkl (\P \Vert \Q)$ instead of $\dkl (\Q \Vert \P)$, since $\dkl (\P \Vert \Q)$ can be estimated or minimized given a sample of approximate reactive trajectories drawn from $\P$. If one instead had a sample of exact reactive trajectories from $\Q$, then one would prefer $\dkl (\Q \Vert \P)$. A sample of exact reactive trajectories could possibly come from a very long molecular dynamics simulation of a not very metastable system or from an enhanced sampling method like parallel replica dynamics~\cite{voter_parallel_1998}. We leave it as an exercise for the reader to show that if $q_\theta$ is a smooth parametric family of functions that satisfy Assumption~\ref{asm: properties of approximate committor} and $\P_\theta$ is the distribution of the transition path process with $q_\theta$ in place of $q$, then $\dkl (\Q \Vert \P_\theta)$ is differentiable under much weaker conditions on $q_\theta$ than Assumption~\ref{asm: properties that guarantee differentiability of dkl}, which implies differentiability of $\dkl (\P_\theta \Vert \Q)$. Moreover, one can easily estimate $\grad_\theta \dkl (\Q \Vert \P_\theta)$ given a sample from $\Q$. Therefore, given a sample from $\Q$ and knowledge of the generator $L$, one could train an approximate committor to minimize $\dkl (\Q \Vert \P_\theta)$.    
\end{remark}

\begin{remark}
The existence of the change of measure $\frac{\d \P}{\d \Q}$ does not imply $\dkl( \P \Vert \Q) < \infty$. We note that $\dkl(\P \Vert \Q) < \infty$ under the same conditions that imply finite variance of the estimator in Lemma~\ref{lem: finite variance of dkl estimator}, but with $\EP[\tau^2] < \infty$ replaced by $\EP[\tau] < \infty$. We leave the proof to the reader. 
\end{remark}

\subsection{Selection: Estimating Relative Entropy Differences}
\label{sec: selection}

Let $\tilde q$ and $\bar q$ be approximate committor functions for which Assumption~\ref{asm: properties of approximate committor} holds. Let $\tilde m$ and $\bar m$ be approximate reactive flux densities supported on $\partial A$ with normalizing constants
\begin{equation*}
  \tilde \zmu := \int_{\partial A} \tilde m(x) \, \d S_A(x)  \text{ and } \bar \zmu := \int_{\partial A} \bar m(x) \, \d S_A(x).
\end{equation*}
Assume that $\tilde m$ and $\bar m$ are strictly positive. 
We define $\tilde \P$ to be the law of the approximate transition path process corresponding to $\tilde q$ and $\tilde m$, i.e.\@ the law of a weak solution of~\eqref{eq: approximate tpp} with singular drift $2\eps \grad \log \tilde q(Y_t)$  and initial condition $Y_0 \sim \tilde \zmu^{-1} m(x) \, \d S_A(x)$. We define $\bar \P$ similarly, but with $\bar q$ and $\bar m$ in place of $\tilde q$ and $\tilde m$. 
We now explain how to estimate the relative entropy difference
\begin{equation*}
  \delta(\tilde q, \tilde m ; \bar q, \bar m) := \dkl(\tilde \P \Vert \Q) - \dkl(\bar \P \Vert \Q)
\end{equation*}
given samples of paths from $\tilde \P$ and $\bar \P$. Using our estimator, one can compare the quality of approximate committor functions.
For example, one can monitor the relative entropy difference between initial and successive approximations to the committor to assess progress during training, or one can compare the quality of coarse-grained approximations to the committor depending on different numbers of variables. 

Suppose that we have a sample of $\tilde N$ paths $\tilde Y^k_t$ drawn from $\tilde \P$:
\begin{equation*}
  \tilde Y^k_t \overset{\mathrm{i.i.d.}}{\sim}  \tilde \P \text{ for } k = 1, \dots, \tilde N . 
\end{equation*}
Under certain conditions on $\tilde q$, the sample average
\begin{align*}
  \tilde I_{\tilde N}:= \frac{1}{\tilde N} \sum_{k=1}^{\tilde N}   \log \left ( \frac{ m(\tilde Y^k_0)}{\lvert \grad  \tilde q (\tilde Y^k_0) \rvert \exp(-U(\tilde Y^k_0)/\eps)} \right ) + \log \tilde q(\tilde Y^k_\tau) -  \int_0^\tau \frac{L \tilde q}{\tilde q}(\tilde Y^k_s) \, \d s
\end{align*}
is a consistent and unbiased estimator of the term
\begin{align*}
  \tilde I &:= \int_{\partial A}  \log \left ( \frac{ m(x)}{\lvert \grad  \tilde q (x) \rvert \exp(-U(x)/\eps)} \right ) \frac{m(x)}{\zmu} \, \d S_A(x) \nonumber \\
                       &\qquad + \E_{\P_\theta} \left [ \log \tilde q (Y_\tau) -  \int_0^\tau \frac{L \tilde q}{\tilde q}(Y_s) \, \d s  \right ]
\end{align*}
appearing in formula~\eqref{eqn: entropy of p given q} for $\dkl ( \tilde \P \Vert \Q)$. 
 We give conditions on $\tilde q$ in Lemma~\ref{lem: finite variance of dkl estimator} in Appendix~\ref{sec: proofs relative entropy} that guarantee finite variance and a weak law of large numbers; the crucial problem is to show that the variance of $\int_0^\tau \frac{L \tilde q}{\tilde q}(\tilde Y^k_s) \, \d s$ is finite even though $\frac{L \tilde q}{\tilde q}$ would typically be singular on $\partial A$, since $\tilde q =0$ on $\partial A$. Of course, one could estimate the analogous term $\bar I$ in $\dkl (\bar \P \Vert \Q)$ by a similar average $\bar I_{\bar N}$ given a sample of $\bar N$ paths $\bar Y^k_t$ drawn from $\bar \P$. 

We have 
\begin{equation*}
 \delta(\tilde q, \tilde m ; \bar q, \bar m) = \log \left ( \frac{\bar \zmu}{\tilde \zmu} \right ) + \tilde I - \bar I,
\end{equation*}
so it remains to estimate the ratio $\frac{\bar \zmu}{\tilde \zmu}$. Analogous problems involving ratios of normalizing constants arise in Bayesian model selection and the calculation of alchemical free energy differences. Refer to~\cite{lelievre_free_2010} for a survey of methods for free energy calculations. We propose to use the Bennett Acceptance Ratio (BAR) method~\cite{bennett_efficient_1976} to estimate $ \frac{\bar \zmu}{\tilde \zmu}$. BAR is in some sense the optimal estimator of $\frac{\bar \zmu}{\tilde \zmu}$ given i.i.d.\@ samples from $\tilde m \, \d S$ and $\bar m \, \d S$. More complex alternatives that involve sampling from multiple distributions, such as the Multistate Bennett Acceptance Ratio (MBAR) method, may be needed in cases where $\tilde m$ and $\bar m$ differ greatly~\cite{shirts_statistically_2008}. 

\begin{remark}
  In practice, to sample from the approximate reactive flux densities $\tilde m$ and $\bar m$, one would most likely use a Markov chain Monte Carlo (MCMC) method. Many methods have been developed to sample distributions supported on submanifolds such as $\partial A$. For example, see~\cite[Sections 3.2.3-4]{lelievre_free_2010}. Of course, MCMC would produce a correlated sample from the approximate reactive flux distribution, not an independent sample as assumed above.  
\end{remark}

\subsection{Training On the Fly: Minimizing the Relative Entropy while Generating Reactive Trajectories}
\label{sec: training}

One could also try to minimize the relative entropy over a family of approximate committor functions by gradient descent. Let $\{q_\theta ; \theta \in \Real^k\}$ be such a family. Let each approximate committor function correspond to an approximate reactive flux distribution 
\begin{equation}\label{eq: mtheta}
  m_\theta (x) := \zmu_\theta^{-1} \grad q_\theta (x) \cdot n(x) \exp(-U(x)/\eps) \, \d S_A (x),
\end{equation}
where
\begin{equation*}
  \zmu_\theta := \int_{\partial A}  \grad q_\theta (x) \cdot n(x) \exp(-U(x)/\eps) \, \d S_A (x).
\end{equation*}
We define $\P_\theta$ to be the law of a weak solution of the stochastic differential equation~\eqref{eq: approximate tpp} observed up to time $\tau$ for the approximate committor function $q_\theta$ and initial condition $Y_0 \sim m_\theta$.
In this section, we derive a consistent estimator of $\grad_\theta \dkl ( \P_\theta \Vert \Q)$ given a sample from $\P_\theta$. We then  use the estimator in a stochastic gradient descent method to minimize $\dkl ( \P_\theta \Vert \Q)$ in Section~\ref{sec: numerical experiments}.

For our results in this section, we require much stronger assumptions on $q_\theta$ and on the distribution of $\tau$ than those imposed elsewhere; cf.\@ Assumption~\ref{asm: properties that guarantee differentiability of dkl}. The most significant of these new assumptions is that
\begin{equation}\label{eq: Lqtheta is zero on bdy A}
  L q_\theta =0 \text{ on } \partial A \text{ for all } \theta \in \Real^k.
\end{equation}
Recall that for the exact committor $q$, or more precisely for the smooth extension guaranteed by Lemma~\ref{lem: properties of committor}, we have $Lq=0$ everywhere in $\bar \D$, including on $\partial A$; cf.\@ Remark~\ref{rem: smoothness of q}.
In Section~\ref{sec: representing the committor}, we explain how to construct a practical families $q_\theta$ satisfying~\eqref{eq: Lqtheta is zero on bdy A} when the boundary of $A$ is planar. However, we do not claim that~\eqref{eq: Lqtheta is zero on bdy A} is necessary for differentiability. We simply cannot prove differentiability without imposing such an assumption. 

If $Lq_\theta=0$ on $\partial A$, then the quantity
\begin{equation*}
  \ell(x;\theta) := \frac{L q_\theta}{q_\theta}(x)
\end{equation*}
that appears in the change of measure formula~\eqref{eq: computable change of measure} extends to a smooth function of $x$ defined on an open neighborhood of $\partial A$, even though we assume $q_\theta =0$ on $\partial A$. To see this, let $Lq_\theta$ take the place of $\tilde q$ in the proof of Lemma~\ref{lem: ratio function is smooth}. Similarly, 
\begin{equation*}
  \grad_\theta \ell(x;\theta) = \frac{\grad_\theta L q_\theta (x)}{q_\theta (x)} - \frac{\grad_\theta q_\theta(x)}{q_\theta(x)} \frac{Lq_\theta(x)}{q_\theta(x)}
\end{equation*}
also extends to a smooth function on an open neighborhood of $\partial A$, since we have  $Lq_\theta= q_\theta =0$ on $\partial A$ for all $\theta$, hence $\grad_\theta Lq_\theta = \grad_\theta q_\theta =0$ on $\partial A$. We will assume that both $\ell(x;\theta)$ and $\grad_\theta \ell(x;\theta)$ are not merely smooth on a neighborhood of $\partial A$ but bounded uniformly over all $x$ and $\theta$.

Another new assumption is that, for each $\theta \in \Real^k$, there is some $\gamma(\theta) >0$ so that
\begin{equation}\label{eq: mgf of tau}
 \E_{\P_\theta}[ \exp(\gamma(\theta) \tau)] < \infty;
\end{equation}
that is, the moment generating function of $\tau$ under $\P_\theta$ is finite on an open interval containing zero. We will not verify~\eqref{eq: mgf of tau} for any particular family of approximate committor functions. However, we note that for the exact transition path process in one-dimension, under some rather stringent conditions on the potential energy $U$ and on the sets $A$ and $B$,  one can show that $\tau$ converges in distribution as $\eps \rightarrow 0$ to a shifted and scaled Gumbel random variable~\cite{cerou_length_2013}. Under much more general conditions, escape times from basins of attraction of minima of $U$ are approximately exponentially distributed for the overdamped Langevin dynamics in the limit of small $\eps$~\cite{gayrard_metastability_2004}. The moment generating function of a Gumbel or exponential random variable will be finite on an open interval containing zero, but not globally finite, which motivates~\eqref{eq: mgf of tau}. 

We summarize these new assumptions together with a few others below.

\begin{assumption}\label{asm: properties that guarantee differentiability of dkl}
  We assume that $q_\theta(x)$ is infinitely differentiable in both $x$ and $\theta$. 
  We impose the following boundary conditions: 
  \begin{itemize}
  \item $q_\theta = 0$ for $x \in \partial A$.
  \item $q_\theta (x) >0$ for $x \in \mathring{\D} \cup \partial B$. 
  \item $\grad q_\theta (x) \cdot n(x) >0$ for $x \in \partial A$. 
  \item $L q_\theta(x) =0$ for $x \in \partial A$.
  \end{itemize}
  We assume the following uniform bounds:
  \begin{itemize}
    \item
    For some $C>0$,
  \begin{equation*}
    \left \lvert \frac{L q_\theta}{q_\theta} (x) \right \rvert \leq C \text{ and } \left \lvert \grad_\theta \frac{L q_\theta}{q_\theta} (x) \right \rvert \leq C
  \end{equation*}
  for all $x \in \bar \D$ and all $\theta \in \Real^k$.
  \item For some $M >0$, 
  \begin{equation*}
    M \geq \zmu_\theta \geq \frac{1}{M} >0 \text{ and } \lvert \grad_\theta m_\theta \rvert \leq M
  \end{equation*}
  for all $\theta \in \Real^k$. 
\end{itemize}
We assume that for each $\theta \in \Real^k$, there is some $\gamma(\theta) >0$ so that
\begin{equation}
  \E_{\P_\theta}[ \exp(\gamma(\theta) \tau)] < \infty.
\end{equation}
\end{assumption}

Under Assumption~\ref{asm: properties that guarantee differentiability of dkl}, the relative entropy is differentiable. 

\begin{theorem}\label{thm: differentiability of dkl}
  Let Assumption~\ref{asm: properties that guarantee differentiability of dkl} hold, and let $\P_\theta$ and $m_\theta$ be defined as above. The relative entropy $\dkl ( \P_\theta \Vert \Q)$ is differentiable as a function of $\theta$, and 
  \begin{align*}
       \grad_\theta \dkl ( \P_\theta \Vert \Q) &= \cov_{\P_\theta} \left (\grad_\theta \log q_\theta (Y_\tau) - \int_0^\tau \grad_\theta \frac{L q_\theta}{q_\theta}(Y_s) \, \d s , \right . \\
                                              &\qquad \qquad \qquad \qquad \left .  \log q_\theta (Y_\tau) - \int_0^\tau \frac{L q_\theta}{q_\theta}(Y_s) \, \d s \right ). 
    \end{align*}
\end{theorem}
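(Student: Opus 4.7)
The plan is to leverage the change-of-measure formula from Theorem~\ref{thm: complete change of measure from transition path process to simulated process} and apply the score-function (log-derivative) trick to obtain the derivative of the relative entropy as a $\P_\theta$-covariance. With the unnormalized choice $m(x) = \grad q_\theta(x) \cdot n(x) \exp(-U(x)/\eps)$ (so that the normalizing constant is $\mu_\theta$) and the identity $\grad q_\theta \cdot n = \lvert \grad q_\theta \rvert$ on $\partial A$ (Lemma~\ref{lem: properties of committor}), the $Y_0$-boundary factor in equation~\eqref{eq: computable change of measure} collapses to $1$, so
\[
\log Z_\tau^\theta = \log \nu - \log \mu_\theta + \log q_\theta(Y_\tau) - \int_0^\tau \frac{L q_\theta}{q_\theta}(Y_s)\, \d s,
\]
and $\dkl(\P_\theta \Vert \Q) = \E_\Q[Z_\tau^\theta \log Z_\tau^\theta]$ is a $\Q$-expectation in which the only $\theta$-dependence lives inside $Z_\tau^\theta$.

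Granting for the moment that one may pass $\grad_\theta$ through the $\Q$-expectation, and using that $\grad_\theta \E_\Q[Z_\tau^\theta] = \grad_\theta 1 = 0$, I get
\[
\grad_\theta \dkl(\P_\theta \Vert \Q) = \E_\Q\bigl[(\grad_\theta Z_\tau^\theta)(\log Z_\tau^\theta + 1)\bigr] = \E_\Q\bigl[\grad_\theta Z_\tau^\theta \cdot \log Z_\tau^\theta\bigr].
\]
Writing $\grad_\theta Z_\tau^\theta = Z_\tau^\theta \grad_\theta \log Z_\tau^\theta$ and converting back to a $\P_\theta$-expectation yields $\E_{\P_\theta}[\grad_\theta \log Z_\tau^\theta \cdot \log Z_\tau^\theta]$, and the same identity $\E_{\P_\theta}[\grad_\theta \log Z_\tau^\theta] = \E_\Q[\grad_\theta Z_\tau^\theta] = 0$ promotes this expectation to the covariance $\cov_{\P_\theta}(\grad_\theta \log Z_\tau^\theta, \log Z_\tau^\theta)$. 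The deterministic summands $\log \nu$ and $-\log\mu_\theta$ in $\log Z_\tau^\theta$, together with $-\grad_\theta \log \mu_\theta$ in its gradient, contribute nothing to a covariance, so what remains in each slot is precisely the random variable appearing in the theorem.

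The main obstacle is justifying the interchange of $\grad_\theta$ and $\E_\Q$, i.e.\ producing a $\Q$-integrable dominating function for $\grad_\theta(Z_\tau^\theta \log Z_\tau^\theta)$ that is uniform over a neighborhood of each $\theta$. Here Assumption~\ref{asm: properties that guarantee differentiability of dkl} carries the load: the uniform bounds $\lvert L q_\theta/q_\theta\rvert, \lvert\grad_\theta L q_\theta/q_\theta\rvert \leq C$ dominate the two path-integral terms in $\log Z_\tau^\theta$ and $\grad_\theta \log Z_\tau^\theta$ by $C\tau$; the bounds on $\mu_\theta$ and $\lvert\grad_\theta m_\theta\rvert \leq M$ handle the $Y_0$-boundary factors; and smoothness of $q_\theta$ with $q_\theta > 0$ on $\partial B$ controls $\log q_\theta(Y_\tau)$ and its $\theta$-gradient at $Y_\tau \in \partial B$. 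The exponential moment hypothesis $\P_\theta[\exp(\gamma(\theta)\tau)] < \infty$ then yields $L^p(\P_\theta)$ bounds on $\tau$ of every order, and via Cauchy--Schwarz applied to products of $Z_\tau^\theta$ with polynomial-in-$\tau$ factors under $\Q$, the required dominating functions follow. The trickiest piece of bookkeeping is extending the exponential moment estimate to a whole neighborhood of $\theta$, which one can accomplish by a short H\"older argument exploiting that $\theta \mapsto Z_\tau^\theta$ varies continuously in $L^q(\Q)$ for every $q$.
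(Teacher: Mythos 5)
Your algebraic core matches the paper's derivation exactly: write $\dkl(\P_\theta\Vert\Q)=\Q\!\left[Z_\tau^\theta\log Z_\tau^\theta\right]$ with the $Y_0$-factor collapsing to $1$ for the choice of $m_\theta$ in Section~\ref{sec: training}, differentiate via the score trick, use $\Q\!\left[\grad_\theta Z_\tau^\theta\right]=0$ both to discard the ``$+1$'' and to promote the resulting $\P_\theta$-expectation to a covariance, and note that $\log\nu$, $\log\mu_\theta$ and $\grad_\theta\log\mu_\theta$ drop out of the covariance. The gap is precisely in the step you flag as the main obstacle. The paper points out that Assumption~\ref{asm: properties that guarantee differentiability of dkl} does \emph{not} supply a $\Q$-integrable dominating function in the textbook sense, and your proposed substitutes do not follow from the hypotheses either. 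Cauchy--Schwarz under $\Q$ between $Z_\tau^\theta$ and polynomial-in-$\tau$ factors requires $\Q\!\left[(Z_\tau^\theta)^2\right]=\P_\theta\!\left[Z_\tau^\theta\right]<\infty$ together with $\Q$-moments of $\tau$; since $\lvert\log Z_\tau^\theta\rvert$ grows like $C\tau$, the former amounts to an exponential moment of $\tau$ under $\P_\theta$ at rate of order the constant $C$ in the assumption, whereas the assumption only grants $\P_\theta[\exp(\gamma(\theta)\tau)]<\infty$ for some possibly much smaller $\gamma(\theta)$, and no moment of $\tau$ under $\Q$ is assumed at all. For the same reason, the claim that $\theta\mapsto Z_\tau^\theta$ is continuous in $L^q(\Q)$ for every $q$ is unsupported, so the H\"older step you invoke to extend the exponential-moment estimate to a neighborhood of $\theta$ does not go through as stated.

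The repair --- and what the paper actually does --- is never to square $Z_\tau^\theta$ or integrate powers of $\tau$ under $\Q$: convert everything to expectations under the fixed measure $\P_\theta$ and dominate there. For $\theta'=\theta+sh$ one has the pathwise bounds $\frac{\d\P_{\theta'}}{\d\Q}\big/\frac{\d\P_{\theta}}{\d\Q}\leq M^2\exp(C\lvert h\rvert\tau)$, $\lvert\log Z_\tau^{\theta'}\rvert\leq \mathrm{const}+C\tau$, and $\lvert\grad_\theta\log Z_\tau^{\theta'}\rvert\leq M^2+C\tau$, uniformly in $\theta'$; restricting to $\lvert h\rvert\leq\gamma(\theta)/(2C)$, the remainder in a mean-value (equivalently, dominated-convergence) argument is dominated in $L^1(\P_\theta)$ by $\mathrm{const}\,(1+\tau)^2\exp(\gamma(\theta)\tau/2)$, which is integrable by the exponential-moment hypothesis at the single parameter $\theta$ --- no extension of that hypothesis to nearby $\theta'$ is needed, because the change of measure from $\P_{\theta'}$ to $\P_\theta$ is controlled pathwise. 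The same device justifies the zero-mean-score identity $\Q\!\left[\grad_\theta Z_\tau^\theta\right]=0$, which you use without comment. With this replacement of your domination scheme, your argument coincides with the paper's proof.
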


We give the proof in Appendix~\ref{sec: proofs relative entropy}. We propose to estimate $\grad_\theta \dkl( \P_\theta \Vert \Q)$ like any other covariance. Given a sample 
\begin{equation*}
   Y^k \overset{\mathrm{i.i.d.}}{\sim}   \P_\theta \text{ for } k = 1, \dots,  N,  
\end{equation*}
we define sample averages
\begin{align*}
  J_N &:= \frac{1}{N} \sum_{k=1}^N  \log q_\theta (Y^k_\tau) - \int_0^\tau \frac{L q_\theta}{q_\theta}(Y^k_s) \, \d s  \\
  K_N &:= \frac{1}{N} \sum_{k=1}^N \grad_\theta \log q_\theta (Y^k_\tau) - \int_0^\tau \grad_\theta \frac{L q_\theta}{q_\theta}(Y^k_s) \, \d s \\
\end{align*}
and the sample covariance
\begin{equation}\label{eq: covariance estimator}
  \begin{split}
G_N &:= \frac{1}{N-1} \sum_{k=1}^N  \left ( \log q_\theta (Y^k_\tau) - \int_0^\tau \frac{L q_\theta}{q_\theta}(Y^k_s) \, \d s  - J_N \right )  \\
    &\qquad \qquad \qquad \times \left ( \grad_\theta \log q_\theta (Y^k_\tau) - \int_0^\tau \grad_\theta \frac{L q_\theta}{q_\theta}(Y^k_s) \, \d s - K_N\right ).
  \end{split}
\end{equation}
Assumption~\ref{asm: properties that guarantee differentiability of dkl} implies that $G_N$ has finite variance. We leave the proof to the reader, noting that it is similar to the proof of Lemma~\ref{lem: finite variance of dkl estimator}. Using this gradient estimator, one can attempt to minimize $\dkl(\P_\theta \Vert \Q)$ by stochastic gradient descent or variants such as Adam~\cite{kingma_adam_2017}. 

\subsection{Importance Sampling and Direct Estimation of Relative Entropy}
\label{sec: importance sampling}

Using our change of measure formula, one could perhaps use importance sampling to estimate arbitrary expectations over the transition path process $\Q$ given a sample of paths from the approximate process $\P$. The discussion in this section is speculative. We do not claim that all estimators presented below are robust in practice or even that they would have finite variance under all conditions. 

Let $g : \Omega \rightarrow \Real$ be $\F_\tau$-measurable, i.e.\@ a function of the transition path process observed up to time $\tau$. By~\eqref{eq: computable change of measure}, assuming for simplicity that $\tilde q = 1$ on $\partial B$ and $m = \lvert \grad \tilde q \rvert \exp(-U /\eps)$, we have
\begin{align*}
  \EQ [ g ] &= \EP \left [ Z_\tau^{-1} g  \right ] \\
           &= \frac{\zmu}{\znu}  \EP \left [\exp \left (\int_0^{\tau(\omega)}  \frac{L \tilde q}{\tilde q}(Y_s(\omega))\, \d s \right ) g(\omega) \right ],
\end{align*}
which suggests the self-normalized importance sampling estimator
\begin{equation}\label{eqn: SNIS}
  \bar g_N = \frac{\frac{1}{N}\sum_{k = 1}^N \exp \left (\int_0^\tau  \frac{L \tilde q}{\tilde q}(Y^k_s)\, \d s \right ) g(Y^k)}{\frac{1}{N}\sum_{k = 1}^N \exp \left (\int_0^\tau  \frac{L \tilde q}{\tilde q}(Y^k_s)\, \d s \right )},
\end{equation}
of $\EQ[g]$, where $Y^k \overset{\mathrm{i.i.d.}}{\sim}   \P \text{ for } k = 1, \dots,  N$. Experience with other estimators related to Girsanov's theorem suggests that such a na\"ive estimator will not be robust. Some successes have been attained in the importance sampling of paths of diffusion processes by roughly similar means~\cite{zhang_koopman_2022,dupuis_importance_2012,vanden-eijnden_rare_2012}. We hope that our results will facilitate the development of analogous methods for the transition path process. However, we note that~\cite{zhang_koopman_2022,dupuis_importance_2012,vanden-eijnden_rare_2012} treat only paths observed up to a fixed finite time, not up to an unbounded stopping time as in our work. This is a significant challenge. In fact, since the stopping time is expected to have a moment generating function defined only a finite interval, it is not clear to us that the variance of $\bar g_N$ must be finite even if one assumes that $L \tilde q / \tilde q$ is bounded. By contrast, note that our estimators of relative entropy differences and gradients do have finite variance under plausible assumptions. Nonetheless, in Section~\ref{sec: numerical experiments}, we present numerical experiments related to a simple problem for which the self-normalized importance sampling estimator produces good results. 

One might also consider the possibility of estimating the relative entropy $\dkl(\P \Vert \Q)$, not a relative entropy difference, for a given approximate committor function. The crux of the problem is to estimate the ratio of normalizing constants
\begin{equation*}
  \frac{\znu}{\zmu}
\end{equation*}
given a sample of transition paths from $\P$. In Section~\ref{sec: selection}, we explain how to estimate all other terms comprising the relative entropy. As above, we have 
\begin{align*}
  \frac{\znu}{\zmu} &= \EP \left [\exp \left (\int_0^{\tau}  \frac{L \tilde q}{\tilde q}(Y_s)\, \d s \right )  \right ], 
\end{align*}
which suggests the estimator 
\begin{equation}\label{eq: norm constant ratio estimator}
  \frac{1}{N} \sum_{k = 1}^N \exp \left (\int_0^\tau  \frac{L \tilde q}{\tilde q}(Y^k_s)\, \d s \right )
\end{equation}
for $\znu/\zmu$, where  $Y^k \overset{\mathrm{i.i.d.}}{\sim}   \P \text{ for } k = 1, \dots,  N$. We do not expect this estimator to be any more robust than the more general importance sampling estimator above.

\section{Numerical Methods}
\label{sec: numerical methods}

In this section, we address some obstacles to the practical implementation of the estimators proposed above. In Section~\ref{sec: representing the committor}, we explain how to construct families of committor functions that satisfy Assumption~\ref{asm: properties of approximate committor} and the boundary condition $L\tilde q=0$ on $\partial A$ that arises in Section~\ref{sec: training}. In Section~\ref{sec: alternative expression}, we explain how to handle the possibly singular integrand $\frac{L \tilde q}{\tilde q}$ that appears in our change of measure formula~\eqref{eq: computable change of measure}.

\subsection{Representing the Committor Function}
\label{sec: representing the committor}
We devise a practical means of representing approximate committor functions that satisfy Assumption~\ref{asm: properties of approximate committor} and the condition
\begin{equation*}
  L \tilde q = 0 \text{ on } \partial A
\end{equation*}
appearing in Assumption~\ref{asm: properties that guarantee differentiability of dkl}.
In our approach, one must first choose a computable function $T: \D\rightarrow \Real$ with the following properties:
\begin{itemize}
\item $T$ extends to a smooth function on an open neighborhood of $\overline \D$.
\item  $\grad T(x) \cdot n(x) > 0$ for all $x \in \partial A$.
\item $T(x) =0$ for all $x \in \partial A$.
\item $T(x) >0$ for all $x \in \D \setminus \partial A$. 
\end{itemize}
For example, if $A = \{x \in \Real^d; \xi(x) \leq a\}$ were a sublevel set of some function $\xi: \Real^d \rightarrow \Real$, one could try $T=\xi-a$. Observe that all of the above conditions would hold at least locally in a neighborhood of $\partial A$ if $\lvert \grad \xi(x) \rvert$ were positive on $\partial A$. 

In our numerical experiments, the approximate committor takes the form
\begin{equation*}
  \tilde q = T \exp(w), 
\end{equation*}
where $T$ is as above and $w : \D \rightarrow \Real$ belongs to some convenient family of smooth functions. Any $\tilde q$ of this form satisfies Assumption~\ref{asm: properties of approximate committor}. In particular, the normal derivative of $\tilde q$ is positive over $\partial A$, since
\begin{align*}
  \grad \tilde q(x) \cdot n(x) &= \big \{ \grad T(x) \exp(w(x)) + \grad w(x) T(x) \exp(w(x)) \big \} \cdot n(x) \\
                               &= \grad T(x) \cdot n(x) \exp(w(x)) \\
                               &>0
\end{align*}
for $x \in \partial A$. Moreover, for any $T$ meeting the conditions above, the exact committor $q$ can be written in the form $q = T \exp(w)$ for $w = \log \left ( q/ T \right)$, and this $w$ extends to a smooth function on an open neighborhood of $\Omega$ by Lemma~\ref{lem: ratio function is smooth}.

The condition $L \tilde q = 0$ on $\partial A$ appearing in Assumption~\ref{asm: properties that guarantee differentiability of dkl} is equivalent with a Neumann boundary condition on $w$. We have
\begin{equation}\label{eq: L tilde q}
L \tilde q =  \left \{ L T + 2\eps \grad w \cdot \grad T + T \left ( L w + \eps \lvert \grad w \rvert^2 \right ) \right  \} \exp(w). 
\end{equation}
Since $T=0$ and $\grad T \cdot n >0$ on $\partial A$, $\grad T = \lvert \grad T \rvert n$ on $\partial A$, and therefore~\eqref{eq: L tilde q} implies that $L \tilde q =0$ on $\partial A$ if and only if $w$ satisfies the Neumann boundary condition
\begin{equation}\label{eq: neumann condition for lq zero}
  \grad w (x) \cdot n (x) = -\frac{1}{2 \eps} \frac{L T (x)}{\lvert \grad T (x) \rvert}
\end{equation}
for $x \in \partial A$. Of course, the difficulty of imposing such a boundary condition in practice depends on the geometry of $\partial A$.

\subsection{Alternative Expression for the Improper Integral}
\label{sec: alternative expression}
Observe that
$
  \frac{L  \tilde q}{\tilde q}
$
must be singular on $\partial A$ unless $L \tilde q =0$ on $\partial A$. Moreover, to compute $L\tilde q$ requires computing $\lap \tilde q$. \change{If $\tilde q$ is an artifical neural network and the state space is high-dimensional, then it may be inefficient to compute $\lap \tilde q$ by automatic differentiation. See~\cite{hu_hutchinson_2024} for an explanation of the problem and for a general strategy that can be used to efficiently estimate $\lap \tilde q$ in a stochastic gradient descent method at the cost of increasing the variance of the gradient estimates.}
Here, we derive a convenient alternative expression for the improper integral term
\begin{equation*}
\int_0^\tau  \frac{L  \tilde q}{\tilde q}(Y_s)\, \d s 
\end{equation*}
in the relative entropy~\eqref{eqn: entropy of p given q}. We eliminate the computationally undesirable dependence on $\lap \tilde q$. We also explain why the integral is finite even though the integrand is singular. Lemma~\ref{lem: alternative form of integral term} below is the crucial result.  

\begin{lemma}\label{lem: alternative form of integral term}
  Let $S : \D \rightarrow \Real$ be a function for which Assumption~\ref{asm: properties of approximate committor} holds. Let $w = \log \frac{\tilde q}{S}$, so
  \begin{equation*}
    \tilde q = S \exp(w).
  \end{equation*}
  We have
  \begin{align*}
    \int_0^t \frac{L\tilde{q}}{\tilde{q}}(Y_{s}) \, \d s= w(Y_{t})-w(Y_{0})&+\int_0^t\frac{LS}{S}(Y_s) -\epsilon\vert\grad w(Y_s)\vert^{2} \, \d s \\
    &- \int_0^t \sqrt{2\epsilon}\grad w(Y_s)\cdot \d W_s,
\end{align*}
where $W_t$ is the $\P$-Brownian motion in Theorem~\ref{thm: complete change of measure from transition path process to simulated process}.
\end{lemma}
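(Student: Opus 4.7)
\medskip
\noindent\textbf{Proof plan.} My strategy is to apply It\^o's formula to $w(Y_t)$ under $\P$ and combine the result with a direct algebraic expansion of $\frac{\L \tilde q}{\tilde q}$, arranging the substitution so that the terms $\L w$ and $2\eps\,\grad w\cdot \grad \log S$ cancel. The relevant smoothness is furnished by Lemma~\ref{lem: ratio function is smooth} applied to the pair $(\tilde q, S)$: since both satisfy Assumption~\ref{asm: properties of approximate committor}, the function $w = \log(\tilde q/S)$ extends to a $C^\infty$ function on an open neighborhood of $\overline{\D}$, and in particular both $w$ and $\grad w$ are locally bounded up to $\partial A$.

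First, I would expand $\frac{\L \tilde q}{\tilde q}$ in terms of $S$ and $w$. The chain-rule computation in the proof of Lemma~\ref{lem: hjb equation} gives the identity $\frac{\L f}{f} = \L \log f + \eps |\grad \log f|^2$ for any positive smooth $f$. Applying this to $f = \tilde q = S\exp(w)$, using $\log \tilde q = \log S + w$ and $\grad \log \tilde q = \grad \log S + \grad w$, and subtracting the analogous identity for $f = S$, yields
\begin{equation*}
  \frac{\L \tilde q}{\tilde q} = \frac{\L S}{S} + \L w + 2\eps\, \grad \log S \cdot \grad w + \eps\,|\grad w|^2.
\end{equation*}

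Next, I would apply It\^o's formula to $w(Y_t)$. By Theorem~\ref{thm: complete change of measure from transition path process to simulated process}, $Y_t$ satisfies $\d Y_t = -\grad U(Y_t)\,\d t + 2\eps\,\grad \log \tilde q(Y_t)\,\d t + \sqrt{2\eps}\,\d W_t$ under $\P$, where $W_t$ is a $\P$-Brownian motion. Since the quadratic variation of $Y$ is $2\eps I\,\d t$, substituting $\grad \log \tilde q = \grad \log S + \grad w$ gives
\begin{equation*}
  w(Y_\tau) - w(Y_0) = \int_0^\tau \bigl[ \L w + 2\eps\,\grad w \cdot \grad \log S + 2\eps\,|\grad w|^2 \bigr] \d t + \int_0^\tau \sqrt{2\eps}\,\grad w \cdot \d W_t.
\end{equation*}
Integrating the algebraic identity from $0$ to $\tau$ and using the It\^o identity above to eliminate $\int_0^\tau [\L w + 2\eps\,\grad w\cdot \grad \log S]\,\d t$ produces exactly the claimed formula: the $2\eps|\grad w|^2$ contributed by It\^o combines with the $\eps|\grad w|^2$ from the algebraic expansion to yield the net $-\eps|\grad w|^2$ in the statement.

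The main obstacle is justifying It\^o's formula up to the hitting time $\tau$ when $Y_0$ may lie on $\partial A$ (so $\grad \log \tilde q$ is singular at $t=0$) and when $\grad w$ need not be globally bounded if $\D$ is unbounded. I would address this by localization: apply It\^o on $[0, t\wedge \tau\wedge \tau_n]$ where $\tau_n$ is the first exit time of $Y$ from a compact set $K_n \subset \mathring{\D}$, with an increasing sequence $K_n$ satisfying $K_n \uparrow \mathring{\D}$, and then pass to the limits $n\to\infty$ and $t\to\infty$. Passage to the limit is legitimate because the quantity $\int_0^\tau \frac{\L \tilde q}{\tilde q}(Y_s)\,\d s$ appearing on the left is a.s.\@ finite (the density $Z_\tau$ of Theorem~\ref{thm: complete change of measure from transition path process to simulated process} is a well-defined positive random variable), and the stochastic integral $\int_0^\cdot \sqrt{2\eps}\,\grad w \cdot \d W_s$ extends to a well-defined local martingale, in view of the fact that $\grad \log r = \grad w - \grad \log(q/S)$ already generates a square-integrable Girsanov exponent via Assumption~\ref{asm: novikov}.
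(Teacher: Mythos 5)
Your proposal is correct and follows essentially the same route as the paper: apply It\^o's formula to $w(Y_t)$ under $\P$, use the identity $\frac{\L f}{f} = \L \log f + \eps\lvert\grad\log f\rvert^2$ (the paper phrases this via the substitutions $\tilde v = -2\eps\log\tilde q$ and $v=-2\eps\log S$, which is the same computation), and rearrange to eliminate the $\L w + 2\eps\,\grad w\cdot\grad\log S$ terms. Your added localization discussion addresses a technical point the paper's proof passes over silently, but it does not change the argument.
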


We give the proof in Appendix~\ref{sec: proofs relative entropy}. For theoretical purposes, we may take $S=q$ in Lemma~\ref{lem: alternative form of integral term}, so that $w=\log r = \log (\tilde q/q)$, $LS =0$ everywhere in $\bar \D$ including on $\partial A$, and the $\frac{LS}{S}$ term vanishes. In that case, one recovers a formula for the change of measure that is similar to~\eqref{eqn: first formula for z}. We use this formula in the proofs of some results in Section~\ref{sec: applications}. For computational purposes, one can choose $S$ so that $\lap S$ is easy to compute explicitly without resorting to automatic differentiation, and then one need only compute $\grad \tilde q$, not $\lap \tilde q$, when evaluating $\int_0^\tau \frac{L\tilde{q}}{\tilde{q}}(Y_{t}) \, \d t$. Note that if $w$ is a neural network, then $\lap \tilde q$ will most likely be intractable, but $\grad \tilde q$ can be computed efficiently by backpropagation.

We also note that the formula implies that the integral $\int_0^\tau \frac{L\tilde{q}}{\tilde{q}}(Y_{t}) \, \d t$ is finite with probability one even though the integrand is singular on $\partial A$. In fact, assuming that $r$ and $\grad \log r$ are bounded, 
\begin{equation*}
\EP \left [ \int_0^h \frac{L\tilde{q}}{\tilde{q}}(Y_{s}) \, \d s \right ] = O(\sqrt{h}) \text{ and } \var_{\P} \left ( \int_0^h \frac{L\tilde{q}}{\tilde{q}}(Y_{s}) \, \d s \right ) = O(\sqrt{h})
\end{equation*}
in the limit as $h \rightarrow 0$. This suggests that it should be feasible to devise an accurate numerical integrator for the calculation of $\int_0^\tau \frac{L\tilde{q}}{\tilde{q}}(Y_{s}) \, \d s$.


\begin{remark}
We warn the reader that since $W_t$ depends on $\tilde q$, one has to be rather careful in attempts to use this formula to simplify the calculation of derivatives of the relative entropy. 
\end{remark}

\section{Evaluating a One-Dimensional Approximation of a Toy Two-Dimensional System}
\label{sec: numerical experiments}

Here, we illustrate how the methods devised above might be used to assess the quality of a low-dimensional approximation to a committor function, to train an approximate committor function, and to compute statistics of reactive trajectories. For readability, we do not describe all aspects of the implementation of our numerical methods in this section; see Appendix~\ref{apx: numerics details} for details.

\subsection{Toy Two-Dimensional System}
We consider a two-dimensional system, where 
\begin{equation*}
  A = \{ x\in \Real^2; x_1 \leq a\} \text{ and } B = \{ x\in \Real^2; x_1 \geq b\}
\end{equation*}
for
\begin{equation*}
  a = -0.75 \text{ and } b=0.85.
\end{equation*}
We define $U_1 : \Real \rightarrow \Real$ by
\begin{align*}
  U_1(x) = 3 \left (x^2 + \frac{1}{20} \right ) \left (5 (x^2-1)^2 + \frac{x}{2} \right )
\end{align*}
and $U : \Real^2 \rightarrow \Real$ by 
\begin{equation*}
  U(x_1, x_2) = U_1(x_1) + x_2^2+ x_2(x_1-0.515)^2.
\end{equation*}
We take
\begin{equation*}
  \eps= \frac{10}{13}.
\end{equation*}
A contour plot of $U$ with the reagent and product sets indicated appears in Figure~\ref{fig: reactive-trajectory}.

\subsection{One-Dimensional Approximation to the Committor}
\label{subsec: one dim committor}
Observe that only the single term  $x_2(x_1-0.515)^2$ in $U$ couples $x_1$ with $x_2$. If one eliminates that term, then the overdamped Langevin dynamics decouples into independent equations for the coordinate components. The first component satisfies
\begin{equation*}
  \d X_{t, 1} = - U'_1(X_{t, 1} ) \, \d t + \sqrt{2 \eps } \, \d B_{t,1}.
\end{equation*}
In that case, the committor $q_1$ is a function of $x_1$ alone, and it solves
  \begin{alignat*}{2}
    -U^{\prime}_1(x_1)q_1^{\prime}(x_1)+\epsilon q_1^{\prime\prime}(x_1)&&=0, \\
    q_1(a)&&=0, \\
    q_1(b)&&=1.
\end{alignat*}
Using integrating factors yields
\begin{equation} \label{eq: exact_q_1d}
  q_1(x_1)=\frac{\int_{a}^{x_1}\exp \left ({\frac{U_1(x)}{\epsilon}} \right ) \, \d x}{\int_{a}^{b} \exp  \left ({\frac{U_1(x)}{\epsilon}}  \right ) \, \d x}.
\end{equation}
We calculated $q_1$ by quadrature using the integrate module in SciPy~\cite{2020SciPy-NMeth}; cf.\@ Figure~\ref{fig: q1}.
In the numerical experiments below, we assess the quality of $q_1$ as an approximation of the exact committor $q$ and we train an improved approximation.

\begin{figure}[h]
  \caption{The coarse-grained approximate committor $q_1$ and the corresponding effective potential $U -  2 \eps \log q_1$. In the left pane, we plot $q_1$. In the middle, we plot the effective potential $U -  2 \eps \log q_1$ corresponding to $q_1$. The gradient of the effective potential is the biasing force on reactive trajectories generated using $q_1$. For comparison, in the third panel, we plot the effective potential $U - 2\eps \log q_{\rm{fem}}$, where $ q_{\rm{fem}}$ is a very accurate finite element approximation to the committor. }
   \begin{center}
     \includegraphics{./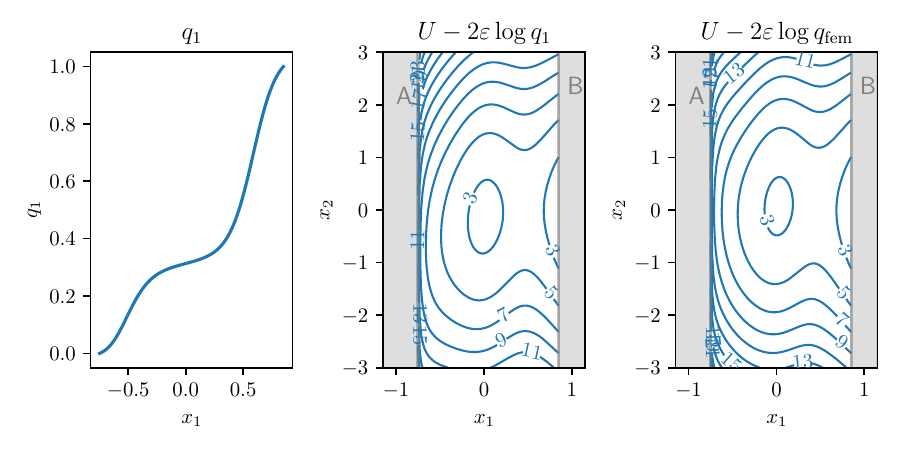}
   \end{center}
   \label{fig: q1}
\end{figure}

For comparison with $q_1$ and to validate our results, we computed a very accurate approximation of the committor by a finite element method, minimizing the Ritz functional
\begin{equation}\label{eqn: ritz form}
R(\tilde q) := \int_{[a,b] \times [-4,4]} \lvert \grad \tilde q(x) \rvert^2 \exp(-U(x)/\eps) \, \d x
\end{equation}
on the domain $[a, b] \times [4,4]$ with Dirichlet boundary conditions $q = 0$ on $\partial A$ and $q = 1$ on $\partial B$ and the natural zero Neumann boundary condition on the top and bottom of the domain where $x_2 = \pm 4$. The Neumann condition corresponds to confining the overdamped Langevin dynamics to the domain by reflection. The results appear in the middle panel of Figure~\ref{fig: err sgd multi}. Observe that the exact committor depends significantly on $x_2$.

\subsection{A Numerical Integrator Based on Operator Splitting}
\label{subsec: integrator}

Estimating the relative entropy and its gradient requires a numerical integrator for the approximate transition path process. To devise such an integrator, we write the approximate committor function in the form
\begin{equation*}
  \tilde q(x) = T(x) \exp( w(x)),
\end{equation*}
where
\begin{equation*}
  T(x) = \frac{x-a}{b-a}.
\end{equation*}
By Lemma~\ref{lem: ratio function is smooth}, any $\tilde q$ that satisfies Assumption~\ref{asm: properties of approximate committor} can be written in the above form for some smooth function $w$; cf.\@ the more general discussion in Section~\ref{sec: representing the committor}. For such $\tilde q$, the approximate transition path equation is
\begin{align}
  \d Y_t &= - \grad U(Y_t) \, \d t + 2 \eps \grad \log \tilde q (Y_t) \, \d t + \sqrt{2 \eps}\, \d B_t \nonumber \\
         &= - \grad U(Y_t) \, \d t +  2 \eps \grad w(Y_t) \, \d t + 2 \eps \grad \log T (Y_t) \, \d t+ \sqrt{2 \eps} \, \d B_t \nonumber \\
  &= - \grad U(Y_t) \, \d t +  2 \eps \grad w(Y_t) \, \d t + \frac{2 \eps}{Y_{t,1}-a} e_1 \, \d t + \sqrt{2 \eps} \, \d B_t. \label{eq: form of tpp in numerical study}
\end{align}
Here, $Y_{t,1}$ denotes the first coordinate component of the two-dimensional vector $Y_t$, and $e_1 = (1,0) \in \Real^2$ is the first standard basis vector. 
We propose an integrator based on a formal splitting of the generator of $Y_t$.

Consider the equation
\begin{equation}\label{eq: two variable bessel}
\d X_t = \frac{2 \eps }{X_{t,1} - a} e_1 \, \d t + \sqrt{2 \eps} \, \d B_t
\end{equation}
corresponding to the last two terms in~\eqref{eq: form of tpp in numerical study}. This will be the first piece in our splitting. Equation~\eqref{eq: two variable bessel} can be simulated exactly: The first coordinate component is a three-dimensional Bessel process and the second is a Brownian motion. To be precise, if $W_t$ is a three-dimensional Brownian motion, then $R_t = \lvert W_t \rvert$ is said to be a three-dimensional Bessel process, and
\begin{equation}\label{eq: bessel}
  \d R_t = \frac{1}{R_t} \d t + \d \hat W_t 
\end{equation}
for some one-dimensional Brownian motion $\hat W_t$~\cite{revuz_continuous_1999}. Equation~\eqref{eq: bessel} holds even when $R_0 = 0$. Note the similarity between the drift terms in~\eqref{eq: bessel} and~\eqref{eq: two variable bessel}. To construct a weak solution of~\eqref{eq: two variable bessel}, we may take 
\begin{equation}\label{eq: bessel construction}
X_t =(R_{2\eps t}+a, W_{2\eps t}),
\end{equation}
where $W_t$ is a one-dimensional Brownian motion that is independent of $R_t$. We observe that $X_t$ remains outside $A$ for all time, since the Bessel process $R_t$ must be nonnegative and is in fact strictly positive for $t >0$.

Increments of the Bessel-like process~\eqref{eq: two variable bessel} can be simulated conveniently. To see this, let $X_t$ solve~\eqref{eq: two variable bessel} with $X_0 = x$  where $x_1 \geq a$. Because of the relationship between $X_t$ and the three-dimensional Bessel process, $X_{\Delta t, 1}$ has the same law as 
\begin{equation*}
  \lVert (x_1-a, 0,0) + \sqrt{2 \eps \Delta t}\xi \rVert + a
\end{equation*}
where $\xi \sim N(0,I)$ is a vector of three independent standard normal random variables. We note that, in this formula, instead of $(x_1-a, 0,0)$, one could choose any vector of length $x_1 -a$, since increments of three-dimensional Brownian motion have a distribution that is symmetric under rotation.    

Our integrator alternates increments of the Bessel-like process~\eqref{eq: two variable bessel} with increments of the ordinary differential equation
\begin{equation}\label{eq: ode part of splitting}
  \d Y_t =( - \grad U(Y_t) + 2 \eps\grad w(Y_t) ) \, \d t
\end{equation}
computed by the Euler method. See Algorithm~\ref{alg: integrator} for details.  We do not prove convergence of the integrator in this work, but see Sections~\ref{sec: crossover times} and~\ref{sec: numerical entropy estimates} for numerical studies that suggest convergence.

\begin{algorithm}
  \label{alg: integrator}
  \caption{An integrator based on operator splitting.}
\begin{enumerate}
\item Fix a time step $\Delta t >0$.
\item Suppose that $\tilde Y_n$ has been calculated as an approximation of $Y_{n \Delta t}$. Generate a vector
  $$\xi_n = (\xi_{n,1}, \xi_{n,2}, \xi_{n,3}) \sim \normal (0, I)$$
   of three independent standard normal random variables.  Let $\xi_n$ be independent of all previously generated random variables. Set
   \begin{equation*}
     \tilde Y_{n+\frac12,1} = \lVert (\tilde Y_{n,1}-a, 0,0) +\sqrt{2 \eps \Delta t} \xi_n\rVert+a.
   \end{equation*}
   Let $\eta_n \sim \normal (0,1)$ be a standard normal random variable that is independent of all previously generated random variables, including $\xi_n$. Set
   \begin{equation*}
     \tilde Y_{n+\frac12,2} = \tilde Y_{n,2}+ \sqrt{2 \eps \Delta t} \eta_n.
   \end{equation*}
   Here, $\tilde Y_{n+\frac12}$ has the same law as the Bessel-like process~\eqref{eq: two variable bessel} at time $t = \Delta t$ with the initial condition $X_0= \tilde Y_n$. 
 \item Set $\tilde Y_{n+1} = \grad w(\tilde Y_{n+\frac12}) \Delta t + \tilde Y_{n+\frac12}$. That is, perform one step of the Euler method for the ordinary differential equation~\eqref{eq: ode part of splitting}. Return to step (2).
\end{enumerate}
\end{algorithm}

\subsection{Training an Improved Approximate Committor}
\label{subsec: training improved approximate committor}
 Here, we embed the low-dimensional or coarse-grained approximation $q_1$ in a parametric family of approximate committor functions and minimize the relative entropy by the stochastic gradient descent method of Section~\ref{sec: training}. We obtain both an improved approximation to the committor and also a measure of the error of the coarse-grained approximation $q_1$.
To be precise, we let
\begin{equation}\label{eq: form of q for fitting difference}
  q_\theta(x) = q_1(x) \exp( w_\theta(x) (b-x_1)),
\end{equation}
where $w_\theta$ is a tensor product of cubic B-splines on a uniform $4 \times 16$ grid covering $[a, b] \times [-3, 3]$. Every member of this family satisfies Assumption~\ref{asm: properties of approximate committor} and has $q_\theta = 1$ on $\partial B$.

We minimized $\dkl(\P_\theta \Vert \Q)$ over $\theta$, which amounts to fitting the difference $\log q - \log q_1$ by a spline using the relative entropy as a loss function. 
We used the Adam optimizer~\cite{kingma_adam_2017} and our gradient estimator~\eqref{eq: covariance estimator}.  Only two points related to the implementation are relevant to the discussion below:  We approximated the integrals in~\eqref{eq: covariance estimator} by right-handed Riemann sums over the discrete trajectories, so the first point at which $\frac{L q_\theta}{q_\theta}$ is evaluated is $X_{\Delta t}$. We took $\Delta t = 0.005$, which appears to be very close to the maximum for which the integrator is stable. See Appendix~\ref{apx: numerics details} for additional details. 

Figures~\ref{fig: err sgd multi} and~\ref{fig: convergence sgd multi} summarize the results. Figure~\ref{fig: err sgd multi} shows that the approximate committor $q_{\rm{sgd}}$ computed by gradient descent is at least qualitatively accurate. The large errors observed near the top and bottom of the domain occur in regions of extremely small probability under the Boltzmann distribution and might be considered irrelevant for most purposes. In any case, we cannot expect an accurate estimate of the committor in those regions when our reactive trajectories visit them so rarely. However, since we use only a small $4 \times 16$ grid in our spline approximation, relative errors approach $10\%$ even in some regions that reactive trajectories visit frequently. Note that relative errors, or equivalently errors in logarithms, are most relevant here, since the singular drift term $2 \eps \grad \log \tilde q$ is a function of $\log \tilde q$.

On the other hand, Figure~\ref{fig: convergence sgd multi} suggests that if one desires primarily the singular drift for simulating reactive trajectories, then $q_{\rm{sgd}}$ should be considered quantitately accurate. The relative entropy $\dkl(\P_{q_{\rm{sgd}}}\Vert \Q)$, where $\P_{q_{\rm{sgd}}}$ is the law of the transition path process with $q_{\rm{sgd}}$ in place of $q$, is approximately $0.011$; cf.\@ the right panel in Figure~\ref{fig: convergence sgd multi}. Therefore, in light of results such as Pinsker's inequality, $q_{\rm{sgd}}$ should produce reactive trajectories having nearly the correct distribution. We present a study of crossover times below to test whether this is in fact the case. We note that the Ritz form $R(q_{\rm{sgd}})$ is also very close to the minimum value. For the committor $q_{\rm{fem}}$ computed by the finite element method, $R(q_{\rm{fem}}) = 0.06612$, and $R(q_{\rm{sgd}})$ is roughly $0.06624$, cf.\@ the right panel in Figure~\ref{fig: err sgd multi}. 

\begin{figure}[h]
  \caption{Approximate committor function calculated by fitting the difference between $\log q$ and $\log q_1$ by a spline. The left pane shows the approximate committor function. The middle shows a very nearly exact committor calculated by the finite element method. The right shows the percent relative error.}
   \begin{center}
     \includegraphics{./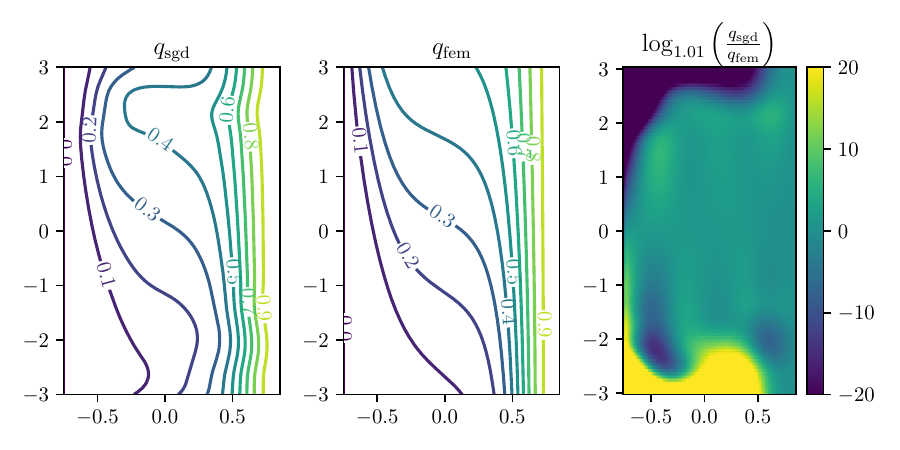}
   \end{center}
   \label{fig: err sgd multi}
\end{figure}

\begin{figure}[h] 
    \caption{Convergence of a stochastic gradient descent method for fitting the difference between $\log q$ and $\log q_1$ by a spline. The blue curve in the left pane is an estimate of $\dkl(\P_{\theta_n} \vert \Q)$ computed using~\eqref{eq: norm constant ratio estimator} with the average taken over the batch of $64$ trajectories generated in each step of gradient descent. The blue curve in the right pane is the same, but smoothed by averaging over the preceding $32$ gradient descent steps. The red curves are quadrature estimates of the Ritz form $R(q_{\theta_n})$ defined in~\eqref{eqn: ritz form}. The left pane shows the first $512$ out of $1024$ steps of gradient descent, and the right shows the last $512$. The blue scale on the vertical axis relates to the entropy estimates, and the red scale to the Ritz forms.}
  \begin{center}
    \includegraphics{./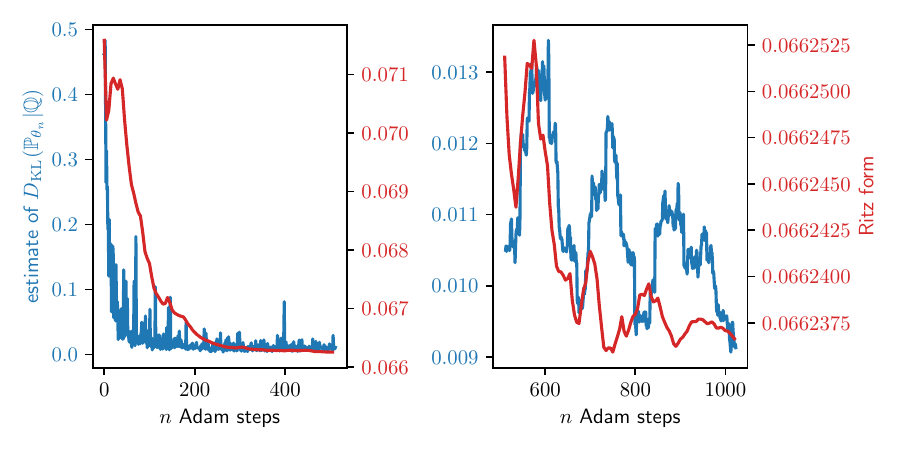}
    \end{center}
\label{fig: convergence sgd multi}
\end{figure}

\subsection{Crossover Times, Numerical Error, and Importance Sampling}
\label{sec: crossover times}
Here, we study crossover times of numerical reactive trajectories. Our primary objective is to validate the operator splitting integrator. We also demonstrate that the importance sampling formula~\eqref{eqn: SNIS} can be used to correct for errors in approximate committors to produce unbiased estimates of statistics of the exact distribution of transition paths. We calculated the very precise estimate
\begin{equation*}
\EQ[\tau]_{\rm{fem}} = 1.153
\end{equation*}
of the expected crossover time by substituting $q_{\rm{fem}}$ for $q$ in the formula 
\begin{equation}\label{eq: tpt formula for crossover time}
\EQ[\tau] = \frac{1}{\eps R(q)} \int_\D q(x) (1-q(x)) \exp(-U(x)/ \eps) \, \d x,
\end{equation}
which appears in Proposition~1.8 in~\cite{lu_reactive_2015}, estimating the integrals by quadrature. Here, $R(q)$ is the Ritz form~\eqref{eqn: ritz form}. We then generated samples of $N = 2^{15}$ approximate reactive trajectories for various values of $\Delta t$ and for the two approximate committor functions $q_1$ and $q_{\rm{sgd}}$. We computed average crossover times for each sample both with and without importance sampling. The results appear in Table~\ref{tab: expected crossover times}. With importance sampling, for either $q_1$ or $q_{\rm{sgd}}$, our integrator produces estimates of the expected crossover time that are very close to $\EQ[\tau]_{\rm{fem}}$, especially for $\Delta t \leq 0.001$. Without importance sampling, both $q_1$ and $q_{\rm{sgd}}$ produce crossover times that are too long on average, but note that the crossover times for $q_{\rm{sgd}}$ are much closer to $\EQ[\tau]_{\rm{fem}}$, indicating that training by stochastic gradient descent did result in significantly better approximate reactive trajectories, as one would expect. For the pooled sample with $\Delta t \leq 0.001$, the error in the expected crossover time for $q_{\rm{sgd}}$ is no more than $1\%$, whereas the error for $q_1$ is around $23\%$. 

\begin{table}
  \caption{Crossover times estimated by numerical integration of the transition path process with an approximate committor. All figures appear in the form $\bar \tau \pm s$, where $\bar \tau$ is the average crossover time for a sample of $N= 2^{15}$ approximate reactive trajectories and $s$ is an estimate of the standard deviation of $\bar \tau$. For importance sampling estimates, the standard deviation was computed using the delta method; cf.\@ Appendix~\ref{apx: numerics details}. Figures appear in boldface when the exact value $\EQ[\tau]_{\rm{fem}}=  1.153$ computed using the finite element method lies within one standard deviation of $\bar \tau$. The bottom row contains averages of pooled samples for the smallest three values of $\Delta t$. Note that the pooled averages computed with importance sampling are within roughly $0.5\%$ of the exact value. Here, the sample averages do not depend strongly on $\Delta t$, but compare Table~\ref{tab: model comp}.}
  \begin{center}
    \begin{tabular}{c|cccc}
\toprule
$\Delta t$ & $q_1$ without IS & $q_1$ with IS & $q_{\rm{sgd}}$ without IS & $q_{\rm{sgd}}$ with IS \\
\midrule
5.0e-03 & 1.438 $\pm$ 0.007 & 1.166 $\pm$ 0.008 & 1.182 $\pm$ 0.006 & 1.170 $\pm$ 0.006 \\
1.0e-03 & 1.406 $\pm$ 0.007 & \textbf{1.145 $\pm$ 0.008} & 1.159 $\pm$ 0.005 & \textbf{1.150 $\pm$ 0.006} \\
2.0e-04 & 1.417 $\pm$ 0.007 & \textbf{1.150 $\pm$ 0.008} & 1.167 $\pm$ 0.006 & 1.159 $\pm$ 0.006 \\
5.0e-05 & 1.415 $\pm$ 0.007 & \textbf{1.152 $\pm$ 0.008} & \textbf{1.157 $\pm$ 0.005} & \textbf{1.148 $\pm$ 0.005} \\
\midrule
 total     & 1.413 $\pm$ 0.004 & \textbf{1.149 $\pm$ 0.005} & 1.161 $\pm$ 0.003 & \textbf{1.153 $\pm$ 0.003} \\
\bottomrule
\end{tabular}
\end{center}
\label{tab: expected crossover times}
\end{table}

\subsection{Estimates of the Entropy and Entropy Differences}
\label{sec: numerical entropy estimates}
Finally, we demonstrate the most obvious way to use our results to evaluate the quality of an approximate committor function: We estimate $\dkl (\P_{q_1} \Vert \Q)$ by the method of Section~\ref{sec: importance sampling}. The results appear in Table~\ref{tab: model comp}. We used the same samples of reactive trajectories as in our study of crossover times. Observe that when estimating entropies instead of crossover times, the numerical error for large values of $\Delta t$ appears to be more significant. We suppose that this is to be expected given our na\"ive strategy of estimating the singular integral $\int_0^\tau \frac{L q_1}{q_1}(Y_s) \, \d s$ by a right-handed Riemann sum.

To validate our numerical estimates of the singular integral and related quantities such as $\dkl(\P_{q_1} \Vert \Q)$, we estimated the ratio of normalizing constants $\frac{\znu}{\zmu}$ by~\eqref{eq: norm constant ratio estimator}, we calculated $\zmu = \int_{\partial A} \lvert \grad q_1 \rvert \exp(- U/\eps) \d S$ more-or-less exactly by quadrature, and then we took the product to estimate $\znu$; cf.\@ the last column of Table~\ref{tab: model comp}. For comparison, we calculated $\znu$ by the formula $\znu = R(q)$ from Proposition~1.8 in~\cite{lu_reactive_2015} with $q_{\rm{fem}}$ in place of $q$, which yields $\znu_{\rm{fem}} = 0.06612$. Roughly speaking, errors decrease from $10\%$ to $1\%$ as the time step $\Delta
t $ decreases by a factor of $100$, which is consistent with weak convergence of order one half.

We note that numerical errors in the estimation of singular integrals do not seem to strongly affect the convergence of stochastic gradient descent. Recall that we used $\Delta t = 0.005$ when training $q_{\rm{sgd}}$, which is the maximum in Table~\ref{tab: model comp} and close to the maximum for which the integrator is stable. However, the value of the Ritz form for $q_{\rm{sgd}}$ was very nearly optimal after training: $R(q_{\rm{sgd}}) =  0.06624$ with the finite element method producing only $R(q_{\rm{fem}}) = 0.06612$, and the estimated relative entropy also decreased quite quickly during training. We tried training with smaller values of $\Delta t$, obtaining results that were only comparable, not improved, when assessed either in terms of relative entropies or Ritz forms.

As explained in Section~\ref{sec: importance sampling}, we do not expect the estimator~\eqref{eq: norm constant ratio estimator} of
$\znu / \zmu$ to be completely reliable in general since it is based on exponential averages. Therefore, for more difficult problems, if one has only a poor approximation to the committor, one might prefer to estimate entropy \emph{differences} as in Section~\ref{sec: selection}, since to compute differences does not require any exponential averages. For example, one could estimate $\dkl (\P_{q_1} \Vert \Q) - \dkl (\P_{q_{\rm{sgd}}} \Vert \Q)$ to measure how much training has improved on the initial estimate $q_1$ of the committor. For our simple toy problem, the results would be redundant with Table~\ref{tab: model comp}, since~\eqref{eq: norm constant ratio estimator} works quite well even for the relatively poor approximate committor $q_1$.      

\begin{table}
  \caption{Estimates of the entropy, singular integral, and normalizing constant based on samples of reactive trajectories biased by $q_1$. Here,  $\bar \znu$ is an estimator of the normalizing constant $\znu = R(q)$ based on sample averages of singular integrals. For comparison, $R(q_{\rm{fem}}) = 0.06612$. }
\begin{center}
\begin{tabular}{c|ccc}
\toprule
$\Delta t$ & $\dkl ( \P_{q_1} \Vert \Q )$ & $ \E_{\P_{q_1}} \left [\int_0^\tau \frac{Lq_1}{q_1}(Y_s) \, \d s \right ]$ & $\bar \znu$ \\
\midrule
5.0e-03 & 0.5970 $\pm$ 0.0066 & -1.2934 $\pm$ 0.0083 & 0.0724 $\pm$ 0.0004 \\
1.0e-03 & 0.6243 $\pm$ 0.0070 & -1.3716 $\pm$ 0.0085 & 0.0689 $\pm$ 0.0004 \\
2.0e-04 & 0.6439 $\pm$ 0.0071 & -1.4225 $\pm$ 0.0087 & 0.0667 $\pm$ 0.0003 \\
5.0e-05 & 0.6486 $\pm$ 0.0074 & -1.4309 $\pm$ 0.0086 & 0.0665 $\pm$ 0.0004 \\
\bottomrule
\end{tabular}
\end{center}
\label{tab: model comp}
\end{table}

\section{Conclusion}

We derive a change of measure formula that establishes well-posedness of the transition path equation with an approximate committor $\tilde q$ in place of the exact committor $q$. Based on the change of measure, we propose a stochastic gradient descent method that minimizes the relative entropy loss $\dkl (\P_\theta \Vert \Q)$ to train an approximate committor on the fly while computing reactive trajectories. We are interested in methods of this type for several reasons: First, techniques like adaptive multilevel splitting and PINNs require the user to specify either a reaction coordinate or collocation points in advance, which may be difficult in practice. We compute the committor, which is in many respects the ideal reaction coordinate, and instead of collocation points we generate reactive trajectories. We do require some initial approximation to the committor. We suggest that one might choose the initial approximation so that in the early stages of training the reactive trajectories amount to steered molecular dynamics trajectories with a singular, time-independent biasing force pushing on a few atoms to induce transitions to occur rapidly. Second, our loss function $\dkl (\P_\theta \Vert \Q)$ serves as a convenient and interpretable measure of the error in the distribution of reactive trajectories. It seems to us that it can be very difficult to assess the quality of solutions to high-dimensional partial differential equations obtained by PINNs and similar methods, since the usual \emph{a posteriori} error estimates for low-dimensional systems do not apply. We must be careful not to claim too much here, since estimating the normalizing constant ratio involved in $\dkl (\P_\theta \Vert \Q)$ may not always be easy, so one might prefer to work with relative entropy differences instead of relative entropies, cf.\@ Section~\ref{sec: importance sampling}. However, we obtained good results for the toy problem in Section~\ref{sec: numerical entropy estimates}. Third, in practice, neither the committor nor  a sample of reactive trajectories suffice in themselves to completely characterize rare transitions in complex systems, and one might wish to compute both at once. 

In this article, our objectives were to provide a theoretical foundation for approximations to the transition path process and to demonstrate the feasibility of computational methods for simultaneously training the committor and computing reactive trajectories. We obtained good numerical results for a two-dimensional toy problem, but to apply our proposed methods to complex problems of practical interest would entail a significant amount of additional work, including the development of a numerical integrator for the case where $A$ has a curved boundary. We also note that while the numerical studies in Sections~\ref{sec: crossover times} and~\ref{sec: numerical entropy estimates} suggest that the integrator is convergent, it appears that the estimates of the singular integrals arising in our formulas for the relative entropy and change of measure are only of weak order one half, so a higher order method might be desirable. Finally, it seems to us that a more detailed comparison of the relative entropy with other losses should be pursued, and so should a comparison of our method with other methods for the calculation of approximate committor functions. 

\appendix

\section{Proofs of Results in Section~\ref{sec: change of measure formula}}
\label{apx: change of measure formula}

\begin{proof}[Proof of Theorem~\ref{thm: complete change of measure from transition path process to simulated process}]
  Since we assume the Novikov condition, by Lemma~\ref{lem: second impractical change of measure formula}, 
  \begin{equation*}
    M_t =  \frac{\lvert \grad  q (Y_0) \rvert}{\lvert \grad \tilde q (Y_0) \rvert} \frac{\tilde q(Y_t)}{q(Y_t)} \exp \left (-\int_0^t \frac{L \tilde q}{\tilde q}(Y_s) \, \d s  \right )
  \end{equation*}
  is an $\F_t$-martingale under $\Q$ with $\EQ[M_t]=1$ for $t \geq 0$. It follows that 
  \begin{align*}
    Z_t &= \frac{\zmu^{-1} m(Y_0)}{\znu^{-1} \lvert \grad q(Y_0) \rvert \exp(-U(Y_0) /\eps)} M_t \\
  \end{align*}
  is a martingale with $\EQ[Z_t]=1$: For $0 \leq s \leq t$, we have 
  \begin{align*}
    \EQ[Z_t \vert \F_s] &= \EQ \left [  \left . \frac{\zmu^{-1} m(Y_0)}{\znu^{-1} \lvert \grad q(Y_0) \rvert \exp(-U(Y_0) /\eps)} M_t \right \vert \F_s \right ] \\
                       &=  \frac{\zmu^{-1} m(Y_0)}{\znu^{-1} \lvert \grad q(Y_0) \rvert \exp(-U(Y_0) /\eps)} \EQ [ M_t \vert \F_s] \\
                       &= \frac{\zmu^{-1} m(Y_0)}{\znu^{-1} \lvert \grad q(Y_0) \rvert \exp(-U(Y_0) /\eps)} M_s. \\
                       &= Z_s, 
  \end{align*}
  Moreover, 
  \begin{align*}
    \EQ[Z_t]&= \EQ \left [ \EQ \left [ \left .  \frac{\zmu^{-1} m(Y_0)}{\znu^{-1} \lvert \grad q(Y_0) \rvert \exp(-U(Y_0) /\eps)} M_t \right \vert \F_0 \right ] \right ] \\
           &= \EQ \left [ \frac{\zmu^{-1} m(Y_0)}{\znu^{-1} \lvert \grad q(Y_0) \rvert \exp(-U(Y_0) /\eps)} \EQ[M_t \vert \F_0] \right ] \\
           &= \EQ \left [\frac{\zmu^{-1} m(Y_0)}{\znu^{-1} \lvert \grad q(Y_0) \rvert \exp(-U(Y_0) /\eps)} M_0 \right ] \\
          &=1,
  \end{align*}
  since $M_0=1$ and $Y_0 \sim \znu^{-1} \lvert \grad q(x) \rvert \exp(-U(x)/\eps)$ under $\Q$.

  We now show that $Z_\tau$ is the density of $\P$ with respect to $\Q$ restricted to the stopping time $\sigma$-algebra $\F_\tau$. Since $Z_t$ is a martingale, the optional stopping theorem~\cite[Chapter~2, Theorem~3.2]{revuz_continuous_1999} implies that for any fixed $t >0$,
  \begin{equation*}
    Z_{t \wedge \tau} = \EQ[Z_t \vert \F_{t \wedge \tau}].
  \end{equation*}
  (Here, $t \wedge \tau$ denotes the minimium of $t$ and $\tau$.) 
  Let $A \in \F_\tau$. Since $\P[\tau < \infty]=1$, we have
  \begin{equation*}
    \P[A]=\lim_{N \rightarrow \infty} \P [A \cap \{\tau \leq N\}].
  \end{equation*}
  Now each of the sets
 \begin{equation*}
    A_N:=A\cap \{ \tau < N\} 
  \end{equation*}
  is in $\F_{\tau \wedge N} \subset \F_N$, since $A \in \F_\tau$. (Here, $\tau \wedge N$ means the minimum of $\tau$ and $N$.)
  Therefore,
  \begin{align*}
    \P[A] &= \lim_{N \rightarrow \infty} \P[A_N]\\
          &\qquad \text{(since $\P[\tau < \infty ]=1$)}\\
          &= \lim_{N \rightarrow \infty}  \int_{A_N} \P (\d \omega) \\
          &=\lim_{N \rightarrow \infty} \int_{A_N} Z_N(\omega) \Q (\d \omega) \\
          &\qquad \text{(since $A_N \in \F_N$ and $Z_N = \left .  \frac{\d \P}{\d \Q} \right \rvert_{\F_N}$)} \\
          &= \lim_{N \rightarrow \infty}  \int_{A_N} \EQ[Z_N(\omega) \vert \F_{\tau(\omega) \wedge N}] \Q(\d \omega) \\
          &\qquad  \text{(since $A_N \in \F_{\tau \wedge N}$)} \\\
          &=  \lim_{N \rightarrow \infty} \int_{A_N} Z_{\tau (\omega) \wedge N}(\omega) \Q(\d \omega) \\
          &\qquad  \text{(by optional stopping, as discussed above)} \\
          &=   \lim_{N \rightarrow \infty} \int_{A_N} Z_{\tau (\omega) }(\omega) \Q(\d \omega) \\
          &\qquad  \text{(since for $\omega \in A_N$, $\tau(\omega) \wedge N = \tau(\omega)$)} \\
          &= \int_A Z_{\tau (\omega)}(\omega) \Q(\d \omega), \\
          &\qquad \text{(since $\Q[\tau < \infty ]=1$)}
  \end{align*}
  and so $Z_\tau$ is the density of $\P$ with respect to $\Q$ on $\F_\tau$. 
\end{proof}

\section{Proofs of Results in Section~\ref{sec: applications}}
\label{sec: proofs relative entropy}

First, we prove Lemma~\ref{lem: alternative form of integral term} which provides an alternative expression for the improper integral in our change of measure formula. 

\begin{proof}[Proof of Lemma~\ref{lem: alternative form of integral term}]
  By Lemma~\ref{lem: ratio function is smooth}, $w$ is smooth. Therefore, by Ito's formula, under $\P$, we have 
  \begin{equation} \label{eq: Ito_w_bar}
    dw(Y_t)=L w(Y_t) \, \d t +2\epsilon \grad \log \tilde{q}(Y_t)\cdot\grad w(Y_t) \, \d t+\sqrt{2\epsilon}\grad  w (Y_t)\cdot \d W_{t}, \\
  \end{equation}
  where $W_t$ is the $\P$-Brownian motion defined in Theorem~\ref{thm: complete change of measure from transition path process to simulated process}
  Now let
  \begin{equation*}
    \tilde{v}:=-2\epsilon\log(\tilde{q})
    \text{ and }
    v:=-2\epsilon T .
  \end{equation*}
  By a calculation similar to the proof of Lemma~\ref{lem: hjb equation}, we have  
  \begin{equation*}
    \frac{L\tilde{q}}{\tilde{q}}=-\frac{1}{2\epsilon}\left(L\tilde{v} - \frac{1}{2}\vert \grad\tilde{v} \vert^{2} \right) \text{ and }
    \frac{L T }{ T }=-\frac{1}{2\epsilon}\left(Lv - \frac{1}{2}\vert \grad v \vert^{2} \right).
  \end{equation*}
  Also, $ w =-\frac{1}{2\epsilon}(\tilde{v}- v )$. 
  Plugging these expressions into ~\eqref{eq: Ito_w_bar}, we get 
  \begin{align*}
    \d  w (Y_t)&=\left(-\frac{1}{2\epsilon}L\tilde{v}(Y_t)+\frac{1}{2\epsilon}L v (Y_t)+\frac{1}{2\epsilon}\grad\tilde{v}(Y_t)\cdot\grad(\tilde{v}- v )(Y_t) \right)\, \d t \\
               &\qquad +\sqrt{2\epsilon}\grad  w (Y_t)\cdot \d B_{t} \\
               &=-\frac{1}{2\epsilon}\left(L\tilde{v}(Y_t)-\frac{1}{2}\vert\grad\tilde{v}(Y_t)\vert^{2}\right)\d t \\
               &\qquad+\frac{1}{2\epsilon}\left(L v (Y_t)-\frac{1}{2}\vert\grad v (Y_t)\vert^{2}\right)\d t \\
               &\qquad+\frac{1}{2\epsilon}\left(\frac{1}{2}\vert\grad\tilde{v}(X_{2})\vert^{2}+\frac{1}{2}\vert\grad v (X_{2})\vert^{2}-\grad\tilde{v}(Y_t)\cdot\grad v (Y_t)\right)\d t \\
               &\qquad+\sqrt{2\epsilon}\grad  w (Y_t)\cdot \d B_{t} \\
               &=\left(\frac{L\tilde{q}}{\tilde{q}}(Y_t)-\frac{L T }{ T }(Y_t)\right)\d t \\
               &\qquad+\epsilon\vert\grad w (Y_t)\vert^{2}\d t+\sqrt{2\epsilon}\grad  w (Y_t)\cdot \d W_{t}, \\
    \end {align*}
    and the result follows.
  \end{proof}

We use Lemma~\ref{lem: alternative form of integral term} to show that our estimator of relative entropy differences has finite variance, at least under certain conditions on $\tilde q$. 
  
\begin{lemma}\label{lem: finite variance of dkl estimator}
  Assume that $\EP[\tau^2] < \infty$, that for some $C >0$,
  \begin{equation*}
    \left \lvert \log \left ( \frac{\tilde q(x)}{q(x)} \right ) \right \rvert \leq C \text{ and } \left \lvert \grad \log \left ( \frac{\tilde q(x)}{q(x)} \right ) \right \rvert \leq C
  \end{equation*}
  for all $x \in \bar \D$, and that for some $M >0$, 
  \begin{equation*}
    \frac{1}{M}\leq  \lvert m(x) \rvert \leq M \text{ and } \frac{1}{M} \leq \lvert \grad  \tilde q (x) \rvert \exp(-U(x)/\eps) \leq M
    \end{equation*}
    for all $x \in \partial A$. 
    If so,
    \begin{equation*}
      \var_\P \left (  \log \left ( \frac{ m(\tilde Y^1_0)}{\lvert \grad  \tilde q (\tilde Y^1_0) \rvert \exp(-U(\tilde Y^1_0)/\eps)} \right ) + \log \tilde q(\tilde Y^1_\tau) -  \int_0^\tau \frac{L \tilde q}{\tilde q}(\tilde Y^1_s) \, \d s \right ) < \infty,
    \end{equation*}
    so $\lim_{\tilde N \rightarrow \infty} \tilde I_{\tilde N}= \tilde I$ in $L^2(\P)$. 
\end{lemma}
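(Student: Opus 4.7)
The plan is to split the random variable into three summands and control each separately. By hypothesis, both $m(x)$ and $\lvert \grad \tilde q(x) \rvert \exp(-U(x)/\eps)$ lie in $[1/M, M]$ for $x \in \partial A$, so the first summand is bounded by $2 \log M$. Since $q = 1$ on $\partial B$ and $Y_\tau \in \partial B$ almost surely, the second summand equals $\log(\tilde q/q)(Y_\tau)$, which by hypothesis is bounded by $C$. Both therefore have finite variance trivially.

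The real work is in controlling the improper integral $\int_0^\tau \frac{L \tilde q}{\tilde q}(Y_s) \, \d s$, whose integrand is \emph{a priori} singular on $\partial A$. To handle this I would invoke Lemma~\ref{lem: alternative form of integral term} with $S = q$; the exact committor satisfies Assumption~\ref{asm: properties of approximate committor} by Lemma~\ref{lem: properties of committor}, and $Lq \equiv 0$ on $\overline{\D}$ (including on $\partial A$) by Remark~\ref{rem: smoothness of q}. Setting $w := \log(\tilde q/q)$, so that the $\frac{LS}{S}$ contribution vanishes, the lemma gives
\begin{equation*}
\int_0^\tau \frac{L\tilde q}{\tilde q}(Y_s) \, \d s \;=\; w(Y_\tau) - w(Y_0) \;-\; \int_0^\tau \eps \lvert \grad w(Y_s) \rvert^2 \, \d s \;-\; \int_0^\tau \sqrt{2\eps}\, \grad w(Y_s) \cdot \d W_s,
\end{equation*}
where $W_t$ is the $\P$-Brownian motion from Theorem~\ref{thm: complete change of measure from transition path process to simulated process}.

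Next I would verify that each of these three pieces is in $L^2(\P)$. The boundary term $w(Y_\tau) - w(Y_0)$ is bounded by $2C$ via the hypothesis on $\log(\tilde q / q)$. The drift integral is dominated by $\eps C^2 \tau$, using the hypothesis on $\grad \log(\tilde q / q)$, so its second moment is at most $\eps^2 C^4 \P[\tau^2] < \infty$. For the stochastic integral, the integrand is bounded, $\P[ \int_0^\tau \lvert \grad w(Y_s) \rvert^2 \, \d s ] \le C^2 \P[\tau] \le C^2 \P[\tau^2]^{1/2} < \infty$, so the stopped process $t \mapsto \int_0^{t \wedge \tau} \sqrt{2\eps}\, \grad w(Y_s) \cdot \d W_s$ is a square-integrable $\P$-martingale, and Ito isometry at the stopping time $\tau$ yields
\begin{equation*}
\P\!\left[ \left( \int_0^\tau \sqrt{2\eps}\, \grad w(Y_s) \cdot \d W_s \right)^{\!2\,} \right] \;=\; \P\!\left[ \int_0^\tau 2\eps \lvert \grad w(Y_s) \rvert^2 \, \d s \right] \;\le\; 2 \eps C^2 \P[\tau^2]^{1/2} \;<\; \infty.
\end{equation*}

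Assembling the five summands via Minkowski's inequality in $L^2(\P)$ shows that the full quantity lies in $L^2(\P)$, hence has finite variance; the weak law of large numbers then gives $\tilde I_{\tilde N} \to \tilde I$ in $L^2(\P)$. The step most in need of care is the Ito isometry at the unbounded stopping time $\tau$: one must justify extending the stopped integral to $\tau$ as an $L^2$-martingale, which follows by a standard truncation argument using $\tau \wedge N$, the uniform bound on $\lvert \grad w \rvert$, and dominated convergence to pass $N \to \infty$.
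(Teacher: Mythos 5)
Your proposal is correct and follows essentially the same route as the paper's proof: reduce the improper integral via Lemma~\ref{lem: alternative form of integral term} with $S=q$ (so $w=\log(\tilde q/q)$ and the $\frac{LS}{S}$ term vanishes), bound the boundary and drift terms by $C$, $\eps C^2\tau$, handle the stochastic integral by It\^o isometry at the unbounded stopping time through a $\tau\wedge N$ truncation, and combine with Minkowski's inequality. Your explicit treatment of the two bounded summands (the $\log M$ and $\log\tilde q(Y_\tau)$ terms) is exactly the part the paper leaves to the reader, so nothing is missing.
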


\begin{proof}
  The crux of the proof is to show that
  \begin{equation*}
    \EP \left [ \left (\int_0^\tau \frac{L \tilde q}{\tilde q}(Y_s) \, \d s \right )^2 \right ] < \infty
  \end{equation*}
  even though $\frac{L \tilde q}{\tilde q}$ may be singular on $\partial A$. Taking $T=q$ in Lemma~\ref{lem: alternative form of integral term} yields
  \begin{align*}
    \int_0^\tau \frac{L \tilde q}{\tilde q}(Y_s) \, \d s &= \log \frac{r(Y_\tau)}{r(Y_0)} - \eps \int_0^\tau \lvert \grad \log r(Y_s) \rvert^2 \, \d s - \sqrt{2 \eps} \int_0^\tau \grad \log r(Y_s) \cdot \d W_s \\
  \end{align*}
  where $r = \tilde q/q$ and $W_s$ is the $\P$-Brownian motion in Theorem~\ref{thm: complete change of measure from transition path process to simulated process}. Therefore, 
  \begin{align}
    \EP \left [ \left (\int_0^\tau \frac{L \tilde q}{\tilde q}(Y_s) \, \d s \right )^2 \right ]^{\frac12} &\leq \EP \left [ \left ( \log \frac{r(Y_\tau)}{r(Y_0)} \right )^2 \right ]^{\frac12} + \eps \EP \left [ \left (\int_0^\tau \lvert \grad \log r(Y_s) \rvert^2 \, \d s \right )^2 \right ]^{\frac12}  \nonumber \\
                                                                                                         &\qquad \qquad \qquad +\sqrt{2 \eps} \EP \left [ \left (\int_0^\tau \grad \log r(Y_s) \cdot \d W_s \right )^2 \right ]^{\frac12} \nonumber \\
                                                                                                           &\leq 2C + \eps C \EP[\tau^2] + \sqrt{2 \eps} \EP \left [ \left (\int_0^\tau \grad \log r(Y_s) \cdot \d W_s \right )^2 \right ]^{\frac12}.  \label{eq: triangle inequality in finite variance proof}                                                                                                                         
  \end{align}  
  
  We bound the third term in~\eqref{eq: triangle inequality in finite variance proof} using the Ito isometry, which entails some minor but tiresome technical difficulties, since standard forms of the isometry only apply to integrals up to finite, deterministic times. Define
  \begin{equation*}
    S_t = \int_0^t \grad \log r(Y_s) \cdot \d W_s.
  \end{equation*}
  We will approximate $S_\tau$ by $S_{\tau \wedge n}$ for $n \in \mathbb{N}$ to apply the Ito isometry; we claim that $S_{\tau \wedge n}$ is a Cauchy sequence in $L^2(\P)$. To see this, let $n< n' \in \mathbb{N}$, and observe that
  \begin{align*}
    \EP[ (S_{\tau \wedge n'} - S_{\tau \wedge n})^2 ]  &= \EP \left [ \left ( \int_{\tau \wedge n}^{\tau \wedge n'} \grad \log r(Y_s) \cdot \d W_s \right )^2 \right ]\\
                                                      &= \EP \left [ \left ( \int_0^{n'} \1_{\tau \wedge n \leq s <  \tau \wedge n'} \grad \log r(Y_s) \cdot \d W_s \right )^2 \right ]\\    &= \EP \left [  \int_0^{n'} \1_{\tau \wedge n \leq s <  \tau \wedge n'} \lvert \grad \log r(Y_s) \rvert^2 \, \d s \right ] \\
                                                      &\leq  \EP \left [  \int_0^{n'} \1_{\tau \geq n} \lvert \grad \log r(Y_s) \rvert^2 \, \d s \right ] \\
    &\leq C^2 \P[\tau \geq n].
  \end{align*}
  The third equality above follows from the Ito isometry, since $\tau$ is a stopping time, hence $\1_{\tau \wedge n \leq s <  \tau \wedge n'}$ is adapted. The first inequality holds, since $\tau < n$ implies $\1_{\tau \wedge n \leq s <  \tau \wedge n'} = 0$. We assume that $\P[\tau < \infty ] =1$, so $\lim_{n \rightarrow \infty} \P[\tau \geq n] =0$, hence $S_{\tau \wedge n}$ is Cauchy. In fact, we have $\lim_{n\rightarrow \infty} S_{\tau \wedge n} = S_\tau$ in $L^2(\P)$, since $\P[\tau < \infty]=1$ implies that $S_{\tau \wedge n}$ converges to $S_\tau$ almost surely. We conclude that
  \begin{align*}
    \EP[S_\tau^2] &= \lim_{n \rightarrow \infty} \EP[S_{\tau \wedge n}^2] \\
                 &= \lim_{n \rightarrow \infty} \EP \left [\left ( \int_0^n \1_{s \leq \tau \wedge n} \grad \log r(Y_s) \cdot \d W_s \right )^2\right ] \\
                 &=  \lim_{n \rightarrow \infty} \EP \left [ \int_0^n \1_{s \leq \tau \wedge n} \lvert \grad \log r(Y_s) \rvert^2 \, \d s\right ]\\
                 &\leq \lim_{n \rightarrow \infty} C^2 \EP[\tau \wedge n] \\
                 &= C^2 \EP[\tau]. 
  \end{align*}
  It follows that
  \begin{equation*}
      \var_\P \left (  \log \left ( \frac{ m(\tilde Y^1_0)}{\lvert \grad  \tilde q (\tilde Y^1_0) \rvert \exp(-U(\tilde Y^1_0)/\eps)} \right ) + \log \tilde q(\tilde Y^1_\tau) -  \int_0^\tau \frac{L \tilde q}{\tilde q}(\tilde Y^1_s) \, \d s \right ) < \infty.
    \end{equation*}
    We leave the remaining details to the reader. 
\end{proof}

We now prove Theorem~\ref{thm: differentiability of dkl} on the differentiability of $\dkl ( \P_\theta \Vert \Q)$ in $\theta$.

\begin{proof}[Proof of Theorem~\ref{thm: differentiability of dkl}]
  Since we assume that $\frac{L q_\theta}{q_\theta}$ and $\grad_\theta\frac{L q_\theta}{q_\theta}$ are bounded, for each $\omega \in \Omega$,
  \begin{equation*}
    \grad_\theta \int_0^{\tau(\omega)} \frac{L q_\theta}{q_\theta}(X_s(\omega)) \, \d s =  \int_0^{\tau(\omega)} \grad_\theta \frac{L q_\theta}{q_\theta}(X_s(\omega)) \, \d s . 
  \end{equation*}
  Moreover, since we assume $\zmu_\theta \geq \frac{1}{M} >0$, and since $q_\theta$ is differentiable, $\log \zmu_\theta$ is differentiable. 
  Therefore, 
  \begin{align}
    \grad_\theta  \frac{\d \P_\theta}{\d \Q} &=  \frac{\d \P_\theta}{\d \Q} \left ( \frac{\grad_\theta q_\theta (X_\tau)}{q_\theta(X_\tau)} - \frac{\grad_\theta \zmu_\theta}{\zmu_\theta} - \int_0^\tau \grad_\theta \frac{L q_\theta}{q_\theta}(X_s) \, \d s\right ). \label{eq: derivative of integrand in dkl}                                               
  \end{align}

  One now wants to interchange differentiation with respect to $\theta$ and expectation over $\Q$ to derive a formula for $\grad_\theta \dkl ( \P_\theta \Vert \Q)$. Write
  \begin{equation*}
    g(x) = x \log x
  \end{equation*}
  so that
  \begin{equation*}
    \dkl ( \P_\theta \Vert \Q ) = \EQ \left [ g \left (  \frac{\d \P_\theta}{\d \Q} \right ) \right ].
  \end{equation*}
  We claim that
  \begin{equation}\label{eq: first formula for derivative}
    \grad_\theta \dkl ( \P_\theta \Vert \Q) = \grad_\theta  \EQ \left [ g \left (  \frac{\d \P_\theta}{\d \Q} \right ) \right ] = \EQ \left [ g' \left (  \frac{\d \P_\theta}{\d \Q} \right ) \grad_\theta \frac{\d \P_\theta}{\d \Q} \right ].
  \end{equation}

  Assumption~\ref{asm: properties that guarantee differentiability of dkl} does not imply that $\grad_\theta   \frac{\d \P_\theta}{\d \Q}$ is bounded uniformly in $\theta$ by a function in $L^1(\Q)$, so textbook results related to differentiating under the integral do not apply. To verify~\eqref{eq: first formula for derivative}, we return to the definition of the derivative. Given any $h,\theta \in \Real^k$, by the mean value theorem, for some $s: \Omega \rightarrow [0,1]$, we have
  \begin{align*} 
   \frac{1}{\lVert h \rVert} \EQ &\left [ g \left (  \frac{\d \P_{\theta + h}}{\d \Q} \right )  -  g \left (  \frac{\d \P_\theta}{\d \Q} \right ) -  g' \left (  \frac{\d \P_\theta}{\d \Q} \right ) \grad_\theta \frac{\d \P_\theta}{\d \Q} \cdot h\right ] \\
    &= \EQ \left [  g' \left (  \frac{\d \P_{\theta+s h}}{\d \Q} \right ) \grad_\theta \frac{\d \P_{\theta+s h}}{\d \Q} \cdot \frac{h}{\lVert h \rVert} -  g' \left (  \frac{\d \P_\theta}{\d \Q} \right ) \grad_\theta \frac{\d \P_\theta}{\d \Q} \cdot \frac{h}{\lVert h \rVert} \right ] \\
    &= \EQ \left [ \left \{  g' \left (  \frac{\d \P_{\theta+s h}}{\d \Q} \right ) - g' \left (  \frac{\d \P_{\theta+h}}{\d \Q} \right ) \right \}  \grad_\theta \frac{\d \P_\theta}{\d \Q} \cdot \frac{h}{\lVert h \rVert} \right . \\
    &\qquad \qquad + \left . g' \left (  \frac{\d \P_{\theta+s h}}{\d \Q} \right ) \left \{\grad_\theta \frac{\d \P_{\theta+s h}}{\d \Q} - \grad_\theta \frac{\d \P_{\theta}}{\d \Q} \right \}  \cdot \frac{h}{\lVert h \rVert}\right ]\\
    &= \E_{\P_\theta} \left [ \left \{ \log \left (  \frac{\d \P_{\theta+s h}}{\d \Q} \right ) - \log \left (  \frac{\d \P_{\theta}}{\d \Q} \right ) \right \}  \grad_\theta \log \frac{\d \P_\theta}{\d \Q} \cdot \frac{h}{\lVert h \rVert} \right . \\
    &\qquad \qquad +  \left \{\log \left (  \frac{\d \P_{\theta+s h}}{\d \Q} \right ) + 1 \right \} \grad_\theta \log \frac{\d \P_{\theta+s h}}{\d \Q}  \cdot \frac{h}{\lVert h \rVert}  \frac{\d \P_{\theta+s h}}{\d \Q} \frac{\d \Q}{\d \P_\theta}   \\
                 &\qquad \qquad - \left .  \left \{\log \left (  \frac{\d \P_{\theta+s h}}{\d \Q} \right ) + 1 \right \} \grad_\theta \log \frac{\d \P_{\theta}}{\d \Q}  \cdot \frac{h}{\lVert h \rVert}\right ]\\
   &=: \E_{\P_\theta} [R(\theta; h)]. 
  \end{align*}

  As $h \rightarrow 0$, the integrand $R(\theta; h)$ in the expectation with respect to $\P_\theta$ above converges pointwise to zero.  We now show that $R(\theta; h)$ is dominated in $L^1(\P_\theta)$, uniformly in $h$ for sufficiently small $h$. If so, $\lim_{h \rightarrow 0} \E_{\P_\theta} [R(\theta; h)] =0$, which verifies differentiability of $\dkl (\P_\theta \Vert \Q)$ and~\eqref{eq: first formula for derivative}.
  By Assumption~\ref{asm: properties that guarantee differentiability of dkl}, we have
  \begin{align*}
    \left \lvert \frac{\d \P_{\theta+s h}}{\d \Q} \frac{\d \Q}{\d \P_\theta} \right \rvert &\leq \frac{\zmu_\theta}{\zmu_{\theta+sh}} \exp \left (\int_0^\tau \left \lvert \frac{Lq_\theta}{q_\theta}(Y_t) - \frac{Lq_{\theta+sh}}{q_{\theta+sh}}(Y_t) \right \rvert \, \d t \right ) \\
    &\leq M^2 \exp ( C\lvert h \rvert \tau),  
  \end{align*}
  \begin{align*}
    \left \lvert \log \left (  \frac{\d \P_{\theta}}{\d \Q} \right ) \right \rvert
    &\leq  \left \lvert \log \left ( \frac{\zmu_\theta}{\znu} \right ) \right \rvert  +  \int_0^\tau \left \lvert  \frac{L q_\theta}{q_\theta}(Y_t) \right \rvert  \, \d t \\
    &\leq \lvert \log \znu \rvert + \lvert \log M \rvert + C \tau, 
  \end{align*}
  and
  \begin{align*}
    \left \lvert \grad_\theta \log \frac{\d \P_{\theta}}{\d \Q} \right \rvert &\leq \left \lvert \frac{\grad_\theta m_\theta}{m_\theta} \right \rvert + \int_0^\tau \left \lvert \grad_\theta  \frac{L q_\theta}{q_\theta}(Y_t) \right \rvert \, \d t  \\
    &\leq M^2 + C \tau.
  \end{align*}
  Therefore, when $\lvert h \rvert \leq \frac{\gamma(\theta)}{2C}$, 
  \begin{align*}
    \lvert R(\theta; h) \rvert &\leq 2(\lvert \log \znu \rvert + \lvert \log M \rvert + C \tau)(M^2+C\tau)\\
                 &\qquad + (1+\lvert \log \znu \rvert + \lvert \log M \rvert + C \tau)(M^2+C\tau) \\
                 &\qquad + (1+\lvert \log \znu \rvert + \lvert \log M \rvert + C \tau) (M^2+C\tau)  M^2 \exp \left ( \frac{\gamma(\theta)}{2} \tau \right).
  \end{align*}
  Under Assumption~\ref{asm: properties that guarantee differentiability of dkl}, the random variable on the right-hand-side above is in $L^1(\P_\theta)$, which concludes the proof of differentiability. 
  
  We now derive a more convenient formula for the derivative.
  As a first step, we observe that one can again exchange expectation and differentiation to obtain
  \begin{align}
    0 &= \grad_\theta \E_{\P_\theta} [\Omega] \nonumber \\
      &=\grad_\theta \EQ \left [\frac{\d \P_\theta}{\d \Q} \right ] \nonumber\\
      &= \EQ \left [ \grad_\theta \frac{\d \P_\theta}{\d \Q} \right ] \nonumber\\
      &= \E_{\P_\theta} \left [  \grad_\theta \log \frac{\d \P_\theta}{\d \Q} \right ]. \label{eq: expectation of score is zero} 
  \end{align}
  Now we have 
  \begin{align*}
    \grad_\theta \dkl ( \P_\theta \vert \Q) &= \EQ \left [ g' \left ( \frac{\d \P_\theta}{\d \Q} \right ) \grad_\theta  \frac{\d \P_\theta}{\d \Q} \right ] \\
                                            &= \EQ \left [ \left ( \log \frac{\d \P_\theta}{\d \Q} + 1 \right ) \grad_\theta \frac{\d \P_\theta}{\d \Q}\right ] \\
                                            &= \E_{\P_\theta} \left [\left ( \log \frac{\d \P_\theta}{\d \Q}+1 \right )   \grad_\theta \log \frac{\d \P_\theta}{\d \Q} \right ] \\   
                                            &= \cov_{\P_\theta} \left (    \log \frac{\d \P_\theta}{\d \Q},   \grad_\theta \log \frac{\d \P_\theta}{\d \Q} \right )\\
                                            &= \cov_{\P_\theta} \left (     \log \frac{\znu}{\zmu_\theta} + \log q_\theta (X_\tau) - \int_0^\tau \frac{L q_\theta}{q_\theta}(X_s) \, \d s , \right .\\
                                            &\qquad \qquad \quad \left .  \frac{\grad_\theta q_\theta (X_\tau)}{q_\theta(X_\tau)} - \frac{\grad_\theta \zmu_\theta}{\zmu_\theta} - \int_0^\tau \grad_\theta \frac{L q_\theta}{q_\theta}(X_s) \, \d s \right )\\
                                            &= \cov_{\P_\theta} \left ( \log q_\theta (X_\tau) - \int_0^\tau \frac{L q_\theta}{q_\theta}(X_s) \, \d s , \right .\\
     &\qquad \qquad \quad \left .  \frac{\grad_\theta q_\theta (X_\tau)}{q_\theta(X_\tau)} - \int_0^\tau \grad_\theta \frac{L q_\theta}{q_\theta}(X_s) \, \d s \right )\\
  \end{align*}
  The fourth equality above follows from~\eqref{eq: expectation of score is zero}, and the last follows since $\log \frac{\znu}{\zmu_\theta}$ and $\frac{\grad_\theta \zmu_\theta}{\zmu_\theta}$ are constants that do not depend on $\omega$.
\end{proof}

\section{Detailed Description of Numerical Experiments}
\label{apx: numerics details}
Here, we explain the numerical experiments outlined in Section~\ref{sec: numerical experiments} in more detail.

\subsection{Details related to Section~\ref{subsec: one dim committor}}
We used FENICS to compute the committor function by a finite element method~\cite{BarattaEtal2023}. We minimized the Ritz form~\eqref{eqn: ritz form} with the prescribed boundary conditions over a P1 Lagrange finite element space based on a Delaunay triangulation of a regular rectangular grid of size $256 \times 1024$ covering the domain $[a, b] \times [-4, 4]$.

\subsection{Details related to Section~\ref{subsec: training improved approximate committor}}
When calculating an approximate committor by gradient descent, we took as the function $w_\theta (x)$ a $4 \times 16$ tensor product of cardinal cubic B-splines covering $[a,b] \times [-3,3]$. That is, for $\theta \in \Real^{4 \times 16}$ and $x \in \Real^2$, we defined
\begin{equation*}
  w_\theta (x) = \sum_{i = 0}^3 \sum_{j=0}^{15} \theta_{ij} B_3 \left ( \frac{ x_1 - a}{h_1} - i  \right ) B_3 \left ( \frac{x_2 - (-3)}{h_2} - j  \right ) ,
\end{equation*}
where
\begin{equation*}
h_1 = \frac{b-a}{4-1}, \text{ } h_2 = \frac{3-(-3)}{16-1}
\end{equation*}
and $B_3$ is the cardinal cubic B-spline
\begin{equation*}
B_3(x) := \frac{1}{6} \begin{cases}
(2 - \lvert x \rvert)^3 & \text{ if } 1 \leq \lvert x \rvert  \leq 2 \\
4 - 6x^2 - 3\lvert x \rvert^3 & \text{if } \lvert x \rvert \leq 1 \\
0 & \text{otherwise}
\end{cases}
\end{equation*}

For the Adam optimizer, we used the standard parameters listed in~\cite{kingma_adam_2017}, except that we took $\epsilon = 10^{-4}$ instead of $10^{-8}$ to mitigate Adam's tendency to make wild changes to some entries of $\theta$ when trajectories visit rare regions far from the transition tube. We took an exponentially decreasing step size schedule
\begin{equation*}
  \eta_n = 0.1^{1 + n/512}.
\end{equation*}
Although it is well known that stochastic optimization methods may not converge to a minimizer for such a rapidly decreasing schedule, we obtained better results with an exponentially decreasing schedule than with polynomially decreasing schedules for which convergence is guaranteed. Our batch size was $64$. That is, we calculated $64$ trajectories to estimate $\grad_\theta \dkl (\P_\theta \vert \Q)$ for each Adam step. 

 To simplify the sampling of initial points of trajectories, we approximated the reactive flux distribution $\tilde m$ corresponding to $\tilde q$ by a discrete distribution
\begin{equation*}
  m_d  :=  \frac{1}{\zmu_d} \sum_{i=1}^{N_d} \lvert \grad \tilde q (x_i) \rvert \exp(-U(x_i) / \eps) \delta_{x_i} 
\end{equation*}
supported on the grid 
\begin{equation*}
x_i  = \left ( a,-3 + \frac{6}{N_d -1} \right ) \text{ for } 0\leq i\leq N_d - 1
\end{equation*}
covering the interval $\{a\} \times [-3,3] \subset \partial A$ with $N_d = 1024$. 
Here,
\begin{equation*}
  \zmu_d =  \sum_{i=1}^{N_d} \lvert \grad \tilde q (x_i) \rvert \exp(-U(x_i) / \eps)
\end{equation*}
is a normalizing constant.

Finally, for efficiency, instead of computing $q_1$ defined in~\eqref{eq: exact_q_1d} by quadrature at each point of every trajectory biased by an approximate comittor of the form $q_1(x_1) \exp(w_\theta(x) (b - x_1))$ as in~\eqref{eq: form of q for fitting difference}, we interpolated a quadrature estimate of $q_1$ by a B-spline plus a linear function on a uniform grid of $16$ points covering $[a, b]$. We verified the interpolating spline to be correct to approximately five significant figures. 

\subsection{Details related to Section~\ref{sec: crossover times}}

We used the delta method to produce the standard errors in Tables~\ref{tab: expected crossover times} and~\ref{tab: model comp}. That is, to estimate the asymptotic variance of the self-normalized importance sampling estimator
\begin{equation*}
  \bar \tau := \frac{\frac{1}{N}\sum_{k = 1}^N \exp \left (\int_0^{\tau_k}  \frac{L \tilde q}{\tilde q}(Y^k_s)\, \d s \right ) \tau_k}{\frac{1}{N}\sum_{k = 1}^N \exp \left (\int_0^{\tau_k}  \frac{L \tilde q}{\tilde q}(Y^k_s)\, \d s \right )},
\end{equation*}
of the expected crossover time $\EQ[\tau]$, we first calculate the sample averages
\begin{align*}
  \tilde \tau &:= \frac{1}{N}\sum_{k = 1}^N \exp \left (\int_0^{\tau_k}  \frac{L \tilde q}{\tilde q}(Y^k_s)\, \d s \right ) \tau_k, \\
  \bar Z &:= \frac{1}{N}\sum_{k = 1}^N \exp \left (\int_0^{\tau_k}  \frac{L \tilde q}{\tilde q}(Y^k_s)\, \d s \right ) ,
\end{align*}
and the covariance matrix
\begin{equation*}
  \bar \Sigma :=
  \begin{pmatrix}
    \Sigma_{\tilde \tau, \tilde \tau} &\Sigma_{\tilde \tau, \bar Z} \\
    \Sigma_{\bar Z, \tilde \tau} &\Sigma_{\bar Z, \bar Z} \\
  \end{pmatrix},
\end{equation*}
where $\Sigma_{\tilde \tau, \bar Z}$ is the empirical covariance 
\begin{equation*}
  \Sigma_{\tilde \tau, \bar Z}  := \frac{1}{N-1} \sum_{k = 1}^N\left \{ \exp \left (\int_0^{\tau_k}  \frac{L \tilde q}{\tilde q}(Y^k_s)\, \d s \right ) \tau_k - \tilde \tau \right \} \left \{ \exp \left (\int_0^{\tau_k}  \frac{L \tilde q}{\tilde q}(Y^k_s)\, \d s \right )  - \bar Z \right \},
\end{equation*}
of $\tilde \tau$ with $\bar Z$ and the other entries of $\bar \Sigma$ are defined similarly. We then define $g(x, y) = x/y$ so that $\bar \tau = g(\tilde \tau, \bar Z)$, and the delta method~\cite{bilodeau_theory_1999} yields
\begin{equation*}
  s^2 :=  \frac{1}{N} \grad g(\tilde \tau, \bar Z)^\t \bar \Sigma \grad g( \tilde \tau, \bar Z) = \frac{1}{N} \begin{pmatrix} 1/\bar Z, & - \tilde \tau / \bar Z^2 \end{pmatrix} \bar \Sigma \begin{pmatrix} 1/\bar Z \\ - \tilde \tau / \bar Z^2 \end{pmatrix} .
\end{equation*}
as an estimate of the asymptotic variance of $\bar \tau$.

\bibliographystyle{abbrv}
\bibliography{tpp,tpp-extra}

\end{document}